\newcommand\smx[1]{\left(\begin{smallmatrix}#1\end{smallmatrix}\right)}
\newtheorem{thm}{Theorem}[section]
\newtheorem{prop}{Proposition}[section]
\newtheorem*{exm}{Example}
\begin{document}

\title{On the extendibility of partially and Markov exchangeable binary sequences}

\author{DAVIDE DI CECCO\footnote{email: {\tt davide.dicecco@gmail.com}}}
\date{}
\maketitle

\begin{abstract}
In [Fortini et al., Stoch. Proc. Appl. {\bf 100} (2002), 147--165]
it is demonstrated that a recurrent Markov exchangeable process in
the sense of Diaconis and Freedman  is essentially a partially
exchangeable process in the sense of de~Finetti. In case of finite
sequences there is not such an equivalence. We analyze both finite
partially exchangeable and finite Markov exchangeable binary
sequences and formulate necessary and sufficient conditions for
extendibility in both cases.
\end{abstract}
\textit{Keywords}: Markov exchangeability; Partial
exchangeability; Extendibility\\
\textit{2000 Mathematics Subject Classification}. 60G09.

\section{\large Introduction}

A finite  sequence of r.v.s  $(X_1,\ldots,X_n)$  defined on a
common probability space is said exchangeable (sometimes
$n$--exchangeable) if its joint distribution is invariant under
permutations of its components. The sequence  may  or may not be
the initial segment of a longer exchangeable sequence, i.e., as is
said, it may or may not be ``extendible'', and is said
$\infty$--extendible, if it is the initial segment of an infinite
exchangeable  sequence. de~Finetti characterized all the
$n$--exchangeable sequences of r.v.s taking values in a finite
space $I$, disregarding their extendibility,  as unique mixtures
of certain $n$--exchangeable, not extendible distributions, namely
the hypergeometric processes. From this result, he has been able
to demonstrate his representation theorem for exchangeable
infinite sequences by a passage to the limit, and in
\cite{DeFinetti69} derived necessary and sufficient conditions for
extendibility of $\{0,1\}$--valued finite sequences in a geometric
approach (see also \cite{Diaconis77},
\cite{DiaconisFreedman80finite}, \cite{Crisma82} and
\cite{Wood92}).

Under partial exchangeability, introduced in \cite{DeFinetti38}
(pp. 193--205 of \cite{Jeffrey80}), $(X_1$,$\ldots$,$X_n)$ is
divided into groups or subsequences (e.g. women and men)
accordingly to a characteristic we consider relevant (e.g. each
unit's sex), and we retain exchangeability to hold just for
variables within the same subsequence. Again, we can represent
every finite partially exchangeable sequence as a mixture of not
extendible, partially exchangeable sequences, and an analogous
representation theorem holds if all the exchangeable subsequences
forming it  are $\infty$--extendible. de~Finetti in
\cite{DeFinetti59} (pp. 147--227 of \cite{DeFinetti72}), suggested
to consider in a sequence of observations  the last observation
preceding the present one as a relevant characteristic to define a
an interesting case of partial exchangeability. Consider a finite
state space $I$; call the variables immediately subsequent any
occurrence of $i \in I$ the successors of $i$. Then the
subsequences forming the partially exchangeable sequence are those
constituted of the successors of each state in $I$. He apparently
suggested the possibility to characterize, by the usual passage to
the limit, the mixtures of Markov Chains processes as partially
exchangeable processes of that kind.

Diaconis and Freedman in  \cite{DiaconisFreedman80MC} demonstrated
that the limit argument does not hold for mixtures of transient
Markov Chains. They dropped the intuitive idea  of ``relevant
characteristics'', introduce a different notion of partial
exchangeability in terms of sufficient statistics (we will call
this case Markov exchangeability) and characterized the mixtures
of Markov Chains under the additional assumption of recurrence of
the process. In \cite{Fortinietal02} it is demonstrated that the
two definitions (that in terms of subsequences and that in terms
of sufficient statistics) coincide in case of recurrent processes.
But they differ in case of finite sequences.

We will focus on partial exchangeability in the sense of
de~Finetti and Markov exchangeability for finite sequences of
$\{0,1\}$--valued variables, and on the respective notions of
extendibility. Some necessary conditions for the extendibility of
a partially exchangeable finite sequence have been studied in
\cite{VonPlato91}  and in \cite{ScarsiniVerdicchio93}. Finite
Markov exchangeable sequences have been analyzed in \cite{Zaman84,
Zaman86}, but, as far as I know, no criterion for extendibility in
the Markov exchangeable case has been given. In Section \ref{sec:
def} we define a general framework in order to analyze this topic.
In Sections \ref{sec: par exb} we analyze the partially
exchangeable case. In particular we present two bijective
transformations of the probabilities defining a binary partially
exchangeable distribution (i.e. two alternative
parameterizations). The first, introduced by de~Finetti, allows us
to establish necessary and sufficient conditions for extendibility
developing the geometric approach presented in \cite{DeFinetti69},
\cite{Diaconis77} and \cite{Crisma82} for the simply exchangeable
case. The second parameterization, in terms of generalized
covariances, allows us to derive some simpler necessary conditions
related to the central moments of the mixing distributions. In
Section \ref{sec: mark exb} we formulate analogous results for
Markov exchangeable distributions.

\section{\large A general setting} \label{sec: def}

A sequence partially exchangeable in the sense of de~Finetti is
essentially a set of distinct exchangeable subsequences. The
concept of partial exchangeability has been extended in various
way, relating to  ergodic theory and extreme points representation
of a convex set, (see \cite{Freedman62}, \cite{Dynkin78},
\cite{DiaconisFreedman84}, \cite[chap. 12]{Aldous85}). In our case
of discrete time processes taking values in a finite state space,
we will refer to a simple formalization in terms of sufficient
statistics borrowed from \cite{DiaconisFreedmanKoch}, (see also
\cite{Zaman84}). With this formalization, we can represent also
simple exchangeability and partial exchangeability in the sense of
de~Finetti.

Let $(\Omega, \mathcal{F}, P)$ be the probability space on which
all the r.v.s in the sequel will be defined. Consider a sequence
of $n$ r.v.s $(X_1,\ldots ,X_n)$  each  taking values in a finite
set $I$. Consider a statistic $T$ from $I^n$ into a finite set
$\{t_1,\ldots , t_z\}$. We call the  sequence, as well as its
joint distribution, $n$--partially exchangeable with respect to
$T$ if:
\begin{equation}\label{eq: T partexb}
T(\textbf{x}_1) = T(\textbf{x}_2) \Rightarrow P(\textbf{x}_1) =
P(\textbf{x}_2) \qquad \forall \: \textbf{x}_1,\textbf{x}_2 \in
I^n
\end{equation}
That is, $T$ induces a partition of $I^n$ into $z$ equivalence
classes and $P$ attributes the same probability to the elements
within the same class. So we can say that $T$ is a minimal
sufficient statistic for $(X_1,\ldots ,X_n)$ under $P$. Denote
with $[t_i]$ the set $\{\textbf{x} \in I^n \; : \;
T(\textbf{x})=t_i\}$; denote $P(\textbf{x} \in [t_i])$ as
$w\,_{t_i}$, and the probability of any specified sequence in
$[t_i]$ as $p\,_{t_i}$. We have $w\,_{t_i}= |\:[t_i]\:| \cdot
p\,_{t_i}$ where $|\:[t_i]\:|$ denotes the cardinality of the set
$[t_i]$, and the distribution of $(X_1,\ldots ,X_n)$ is completely
defined by the $z$ probabilities $(w\,_{t_1},\ldots ,w\,_{t_z})$
subjected to $\sum_{i=1}^z w\,_{t_i}=1$. On the converse, any set
of nonnegative values $(w\,_{t_1},\ldots ,w\,_{t_z})$ having sum
1, defines a sequence $n$--partially exchangeable w.r.t. $T$.
Consequently the space
\begin{equation}\label{eq: z-simplex}
\diamondsuit_z = \left\{(w\,_{t_1},\ldots ,w\,_{t_z}) \; : \;
w\,_{t_i}\geq 0  ,\:   i=1, \ldots , z ,\: \sum_{i=0}^z
w\,_{t_i}=1 \right\}
\end{equation}
which is the $(z-1)$--dimensional unitary simplex embedded in
$\mathbb{R}^z$, represents all the distributions $n$--partially
exchangeable w.r.t. $T$.  Let $h\,_{[t_i]}(\textbf{x}) =
P(\textbf{x} \; |\; T(\textbf{x})=t_i)$ be  the conditional
probability distribution on $I^n$ given $T$, assessing equal
masses to all the sequences in the equivalence class $[t_i]$ and
mass 0 to the other sequences:
\[
h\,_{[t_i]}(\textbf{x}) =\left\{%
\begin{array}{ll}
  1/|\:[t_i]\:| & \quad \mbox{ if } \textbf{x} \in [t_i] \\
  0 & \quad \mbox{ otherwise } \\
\end{array}%
\right.
\]
Then the following stated in \cite{DiaconisFreedmanKoch} is plain:
\begin{thm}[\cite{DiaconisFreedmanKoch}]\label{thm:finite partial exb}
The set of all the distributions over $I^n$  partially
exchangeable w.r.t. $T$ is a simplex whose vertices are the
extremal distributions $h\,_{[t_i]}$, $i=1,\ldots ,z$, and each
partially exchangeable distribution is a unique mixture of those
extremal distributions with mixing weights
$w\,_{t_1},\ldots,w\,_{t_z}$.
\end{thm}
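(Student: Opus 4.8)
The plan is to exhibit an explicit affine bijection between the standard simplex $\diamondsuit_z$ of \eqref{eq: z-simplex} and the set $\mathcal{P}_T$ of all distributions on $I^n$ that are partially exchangeable w.r.t.\ $T$, sending the $i$-th vertex of $\diamondsuit_z$ onto $h_{[t_i]}$. Everything in the statement then follows, since an affine bijection preserves both the simplex structure and the set of extreme points, and barycentric coordinates with respect to an affinely independent set of vertices are unique.

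First I would record that $\mathcal{P}_T$ is a compact convex set: it is the intersection of the full simplex of probability measures on $I^n$ with the linear subspace cut out by the equalities $P(\mathbf{x}_1)=P(\mathbf{x}_2)$ of \eqref{eq: T partexb}. Next, define $\Phi\colon\diamondsuit_z\to\mathcal{P}_T$ by $\Phi(w_{t_1},\dots,w_{t_z})=\sum_{i=1}^z w_{t_i}\,h_{[t_i]}$. I would check that $\Phi$ is well defined: each $h_{[t_i]}$ is constant on every equivalence class $[t_j]$ (equal to $1/|[t_i]|$ when $j=i$, and to $0$ otherwise), so any convex combination of the $h_{[t_i]}$ is again constant on each class and hence satisfies \eqref{eq: T partexb}; and $\Phi(w)$ is a probability measure because $\sum_i w_{t_i}=1$ and each $h_{[t_i]}$ is one. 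Affinity of $\Phi$ is immediate from the formula.

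Then I would show $\Phi$ is a bijection. For injectivity, the supports $[t_i]$ of the $h_{[t_i]}$ are pairwise disjoint, so evaluating $\sum_i w_{t_i}h_{[t_i]}$ at any fixed $\mathbf{x}\in[t_i]$ returns $w_{t_i}/|[t_i]|$, which recovers $w_{t_i}$; in particular the $h_{[t_i]}$ are linearly, hence affinely, independent. For surjectivity, given $P\in\mathcal{P}_T$, set $w_{t_i}:=P([t_i])=|[t_i]|\,p_{t_i}$; these are nonnegative with sum $1$, and since $P$ is constant and equal to $p_{t_i}=w_{t_i}/|[t_i]|$ on $[t_i]$, we get $P=\sum_i w_{t_i}h_{[t_i]}=\Phi(w)$. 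Thus $\Phi$ is an affine isomorphism of $\diamondsuit_z$ onto $\mathcal{P}_T$. The vertices of $\diamondsuit_z$ are the standard basis vectors $e_i$ and $\Phi(e_i)=h_{[t_i]}$, so $\mathcal{P}_T$ is a $(z-1)$-dimensional simplex with vertices $h_{[t_1]},\dots,h_{[t_z]}$, and the mixing weights of a given $P$ are exactly its barycentric coordinates $w_{t_i}=P([t_i])$, unique because the vertices are affinely independent (equivalently, by the injectivity of $\Phi$ just proved).

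As the text anticipates, there is no real obstacle here; the one point deserving care is the verification that the candidate vertices $h_{[t_i]}$ are affinely independent, since this single fact simultaneously guarantees that $\mathcal{P}_T$ is genuinely a simplex rather than merely a convex polytope, that the $h_{[t_i]}$ are precisely its extreme points, and that the representation as a mixture is unique. The disjoint-support argument above makes this transparent.
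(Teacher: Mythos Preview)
Your argument is correct and complete. The paper itself does not give a proof: it sets up the notation, observes that a $T$--partially exchangeable distribution is determined by and determines a point of $\diamondsuit_z$, defines the $h_{[t_i]}$, and then simply declares the theorem ``plain'' with a citation to \cite{DiaconisFreedmanKoch}. Your affine-bijection write-up is exactly a careful formalization of that one-line remark, with the disjoint-support check making explicit the affine independence that underlies uniqueness and the simplex structure.
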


The extremal distributions can be conceived as urn processes
without replacement, and, depending on the properties of $T$,
de~Finetti's style theorems may be deduced by the convergence of
the hypergeometric processes to the i.i.d. processes.

\section{\large Partially exchangeable binary sequences in the sense of de~Finetti} \label{sec: par
exb}

We say that $(X_1,\ldots ,X_n)$ is partially exchangeable in the
sense of de~Finetti of order $(n_1, \ldots ,n_g)$, and we will
denote it $(n_1, \ldots ,n_g)$--DFPE, if it can be divided into
$g$ exchangeable subsequences $(X_{i,1}, \ldots ,X_{i,n_i})$,
$i=1, \ldots ,g$, $\sum_i n_i=n$. Denote $\sum_{j=1}^{n_i}X_{i,j}$
as $S_{i}$. If the variables are $\{0,1\}$--valued,  $(S_1, \ldots
,S_g)$ is a sufficient statistic in the sense of \eqref{eq: T
partexb}. Denote $P\left(\textbf{x} \in I^n \: : \: S_1=k_1,
\ldots ,S_g=k_g\right)$ as $w\,_{k_1, \ldots ,k_g}^{(n_1, \ldots
,n_g)}$ and the probability of any sequence consistent with
$\left(S_1=k_1, \ldots ,S_g=k_g\right)$ as $p\,_{k_1, \ldots
,k_g}^{(n_1, \ldots ,n_g)}$. Then we have:
\begin{equation}\label{eq: partialp partialq}
w\,_{k_1, \ldots ,k_g}^{(n_1, \ldots ,n_g)} =
\binom{n_1}{k_1}\cdots \binom{n_g}{k_g} \; p\,_{k_1, \ldots
,k_g}^{(n_1, \ldots ,n_g)}
\end{equation}
An $(n_1, \ldots ,n_g)$--DFPE distribution is defined by the
$(n_1+1) \cdots (n_g+1)$ probabilities $w\,_{k_1, \ldots
,k_g}^{(n_1, \ldots ,n_g)}$ defined for every $g$--tuple of
nonnegative integers $(k_1, \ldots ,k_g)$ such that $0\leq k_i
\leq n_i$ for $i=1, \ldots ,g$, subjected to
\[
\sum_{k_1=0}^{n_1} \cdots \sum_{k_g=0}^{n_g} w\,_{k_1, \ldots
,k_g}^{(n_1, \ldots  ,n_g)} =  1
\]
For what we have said  in \eqref{eq: z-simplex}, the
$\left\{w\,_{k_1, \ldots ,k_g}^{(n_1, \ldots
,n_g)}\right\}_{\substack{k_i\leq n_i\\ i=1,\ldots,g}}$ range in
the unitary simplex $\diamondsuit_{(n_1+1) \cdots (n_g+1)}$.

The exchangeability of each subsequence
$(X_{i,1},\ldots,X_{i,n_i})$ implies the exchangeability  of all
its subsets, and we can obtain  all the probabilities of the kind
$\left\{w\,_{l_1, \ldots ,l_g}^{(m_1, \ldots
,m_g)}\right\}_{\substack{l_i\leq m_i\\ i=1,\ldots,g}}$,
$m_i<n_i$, from the $\left\{w\,_{k_1, \ldots ,k_g}^{(n_1, \ldots
,n_g)}\right\}_{\substack{k_i\leq n_i\\ i=1,\ldots,g}}$ through
the following easily proved formula:
\begin{equation}\label{eq: partialq partialq}
w\,_{l_1,...,l_g}^{(m_1,...,m_g)} = \sum_{k_1=l_1}^{n_1-m_1+l_1}
\cdots \sum_{k_g=l_g}^{n_g-m_g+l_g} \frac{\binom{k_1}{l_1}
\binom{n_1-k_1}{m_1-l_1}}{\binom{n_1}{m_1}}\cdots
\frac{\binom{k_g}{l_g}\binom{n_g-k_g}{m_g-l_g}}{\binom{n_g}{m_g}}\:\:
w\,_{k_1,...,k_g}^{(n_1,...,n_g)}
\end{equation}
Denote in particular  the probabilities $w\,_{k_1, \ldots
,k_g}^{(k_1, \ldots ,k_g)}$ as $w\,_{k_1, \ldots ,k_g}$.  We have
\begin{equation}\label{def partiallambda}
P\left\{\bigcap_{i=1}^g (X_{i,s_1}=1, \ldots
,X_{i,s_{k_i}}=1)\right\} = E\left[\prod_{i=1}^g X_{i,s_{1}}
\cdots X_{i,s_{k_i}}\right] = w\,_{k_1,\ldots,k_g}
\end{equation}
for every subset $(s_{1}, \ldots  ,s_{k_i})$ of $k_i$ labels in
$\{1,\ldots,n_i\}$, $i=1,\ldots, g$.  By \eqref{eq: partialq
partialq} we have
\begin{equation}\label{eq: partialq partiall}
w\,_{k_1, \ldots  ,k_g} = \sum_{i_1=k_1}^{n_1} \cdots
\sum_{i_g=k_g}^{n_g} \frac{(i_1)_{k_1}}{(n_1)_{k_1}} \cdots
\frac{(i_g)_{k_g}}{(n_g)_{k_g}} \;\; w\,_{i_1, \ldots ,i_g}^{(n_1,
\ldots  ,n_g)}
\end{equation}
where, from now on, $(i)_k =i(i-1)\cdots (i-k+ 1)$ for $k\leq i$
and $(i)_0=1$.

To define the inverse map of \eqref{eq: partialq partiall}
introduce the difference operator $\Delta_i$ w.r.t. the $i$-th
group: $\Delta_i \left(w\,_{k_1, \ldots ,k_g}\right) = w\,_{k_1,
\ldots ,k_i+1, \ldots  ,k_g} - w\,_{k_1, \ldots ,k_i, \ldots
,k_g}$. Then  we have (see \cite{DeFinetti38})
\begin{equation}\label{eq: partiall partialq}
w\,_{k_1, \ldots  ,k_g}^{(n_1, \ldots  ,n_g)} =
\binom{n_1}{k_1}\cdots \binom{n_g}{k_g} (-1)^{n_1-k_1 + \ldots   +
n_g -k_g} \;\; \Delta_1^{n_1-k_1} \cdots \Delta_g^{n_g-k_g}
\big(w\,_{k_1, \ldots  ,k_g}\big)
\end{equation}
Where $w\,_{0, \ldots ,0}=1$. So the $\{w\,_{k_1, \ldots ,k_g}\}_{\substack{k_i\leq n_i\\
i=1,\ldots,g}}$  suffice to completely define any $(n_1, \ldots
,n_g)$--DFPE binary sequence, i.e. they constitute a
parameterization of an $(n_1, \ldots ,n_g)$--DFPE binary
distribution.

By \eqref{eq: partiall partialq}, in an $(n_1,\ldots ,n_g)$--DFPE
sequence each probability $w\,_{k_1, \ldots , k_g}$ should satisfy
\begin{equation}\label{eq: conditions partiall}
(-1)^{n_1-k_1 + \ldots   + n_g -k_g} \;\; \Delta_1^{n_1-k_1}
\cdots \Delta_g^{n_g-k_g} \big(w\,_{k_1, \ldots  ,k_g}\big) \geq 0
\end{equation}
Moreover, since by \eqref{eq: partialq partiall} it is
$\sum_{k_1=0}^{n_1} \cdots \sum_{k_g=0}^{n_g} w\,_{k_1, \ldots
,k_g}^{(n_1, \ldots ,n_g)} = w\,_{0,\ldots,0} = 1$, the \eqref{eq:
conditions partiall} constitute necessary and sufficient
conditions for a set $\{w\,_{k_1, \ldots , k_g}\}_{\substack{k_i\leq n_i\\
i=1,\ldots,g}}$ with $w\,_{0,\ldots,0} = 1$ to define an
$(n_1,\ldots ,n_g)$--DFPE sequence. Then the $\{w\,_{k_1, \ldots , k_g}\}_{\substack{k_i\leq n_i\\
i=1,\ldots,g}}$ range in the space
\[
\Lambda_{n_1, \ldots  ,n_g}=\Big\{(w\,_{k_1, \ldots , k_g})_{\substack{k_i\leq n_i\\
i=1,\ldots,g}} \:,\: \sum_i k_i >0 \: : \;\mbox{satisfy \eqref{eq:
conditions partiall}} \Big\}
\]

\subsection{\textmd{Generalized covariances}}

We introduce a generalization of the usual concept of covariance
defined as follows: the covariance of order $k$ among the
variables $X_1, \ldots  ,X_k$ is
\begin{equation}\label{eq: generalized covariance}
Cov[X_1, \ldots  ,X_k] = E[(X_1-E[X_1]) \cdots (X_k-E[X_k])]
\end{equation}
Under DFPE, these covariances depends only on the number of
variables involved for each exchangeable subsequence. To simplify
the notation,  denote the value $w\,_{k_1,\ldots ,k_g}$ when
$k_i=1$ and all other subscripts are zero as $w(i)$, i.e.
$E[X_{i,1}] = w(i)$. Then under DFPE any generalized covariance
involving $k_i$ r.v.s of the $i$--th subsequence, $i=1,\ldots ,g$
is equal to
\[
Cov\,_{k_1, \ldots  ,k_g} = E \Big[\big(X_{1,1} - w(1)\big) \cdots
\big(X_{1,k_1} - w(1)\big) \cdots \big(X_{g,1} - w(g)\big) \cdots
\big(X_{g,k_g} - w(g)\big) \Big]
\]
and the relation with the previous parameterization is
\begin{equation}\label{eq: partiall partialcov}
Cov\,_{k_1, \ldots  ,k_g} = \sum_{i_1=0}^{k_1} \cdots
\sum_{i_g=0}^{k_g} \binom{k_1}{i_1} \cdots \binom{k_g}{i_g}
(-1)^{i_1+ \ldots  +i_g} \big(w(1)\big)^{i_1} \cdots
\big(w(g)\big)^{i_g} \;\:w\,_{k_1-i_1, \ldots  ,k_g - i_g}
\end{equation}
\textit{Proof of \eqref{eq: partiall partialcov}:} For the sake of
simplicity, but without loss of generality, set g=2. By expanding
the product,
\[
\big(X_{1,1} - w(1)\big) \cdots \big(X_{1,k_1} - w(1)\big)
\big(X_{2,1} - w(2)\big) \cdots \big(X_{2,k_2} - w(2)\big)
\]
results as the sum of $(k_1+1)(k_2+1)$ terms of the kind
\begin{equation}\label{eq: partialcov passage}
\sum_{h_1<\ldots <h_i} \sum_{s_1<\ldots <s_j}
\big(-w(1)\big)^{k_1-i}  \big(-w(2)\big)^{k_2-j} X_{1,h_1}\cdots
X_{1,h_i}\cdot X_{1,s_1}\cdots X_{1,s_j}
\end{equation}
where the first sum ranges over all the possible $i$--tuples
$(h_1,\ldots ,h_i)$ of distinct labels in $\{1,\ldots,n_1\}$ and
consists of $\binom{k_1}{i}$ terms, the second of $\binom{k_2}{j}$
terms. Passing to the expectation, by \eqref{def partiallambda},
the term \eqref{eq: partialcov passage} results as $\binom{k_1}{i}
\binom{k_2}{j}\big(-w(1)\big)^{k_1-i} \big(-w(2)\big)^{k_2-j}
w\,_{i,j}$, so that
\[
Cov\,_{k_1,k_2} = \sum_{i=0}^{k_1} \sum_{j=0}^{k_2} \binom{k_1}{i}
\binom{k_2}{j}(-1)^{k_1+k_2-i-j} w(1)^{k_1-i} w(2)^{k_2-j}
w\,_{i,j}
\]
\hfill $\Box$ \linebreak \\

One can prove that the inverse map, which is somewhat similar to
the inverse of a binomial transform, is
\begin{equation}\label{eq: partialcov partiall}
w\,_{k_1, \ldots  ,k_g} = \sum_{i_1=0}^{k_1} \cdots
\sum_{i_g=0}^{k_g} \binom{k_1}{i_1} \cdots \binom{k_g}{i_g}
w(1)^{i_1} \cdots w(g)^{i_g} \;\: Cov\,_{k_1-i_1, \ldots ,k_g -
i_g}
\end{equation}
where $Cov\,_{0, \ldots  ,0}=1$ and all the covariances having a
single 1 and all zeros in the subscript are zero. So, a
$(n_1,\ldots ,n_g)$--DFPE binary sequence is completely defined by
the  $g$ probabilities $w(1), \ldots ,w(g)$  together with the
generalized covariances $\{Cov\,_{k_1,\ldots,k_g}\}$ defined for
every $g$--tuple $(k_1,\ldots,k_g)$ with $k_i\leq n_i$ and such
that $\sum_{i=1}^g k_i \geq 2$. The space of the
$Cov\,_{k_1,\ldots,k_g}$ is implicitly defined by
$\Lambda\,_{n_1,\ldots,n_g}$ and \eqref{eq: partiall partialcov}
and is not easily described. We can say that all the
$Cov\,_{k_1,\ldots,k_g}$ can be both positive or negative, and by
\eqref{eq: partialcov partiall} are all null if, and only if,
$X_1, \ldots , X_n$ are i.i.d.

\subsection{\textmd{Extendibility}}\label{sec: ext part}

For the sake of simplicity in this section we set $g=2$, but all
the results hold for a general $g$.

For what we have said, we can represent any $(n_1,n_2)$--DFPE
distribution as a point in the linear spaces
$\diamondsuit_{(n_1+1)(n_2+1)}$ and $\Lambda_{n_1, n_2}$. Formulas
\eqref{eq: partiall partialq} and \eqref{eq: partialq partiall}
define the linear maps between the two  spaces. Clearly these maps
are one--one and onto and establish affine congruence of the two
sets. The $(n_1+1)(n_2+1)$ vertices of
$\diamondsuit_{(n_1+1)(n_2+1)}$ are the points having one
coordinate equal to one and the others equal to zero and represent
the extremal distributions of Theorem \ref{thm:finite partial
exb}. \eqref{eq: partialq partiall} maps this vertices  onto the
vertices of $\Lambda_{n_1, n_2}$. In particular, the extremal
distribution having $w\,_{k_1, k_2}^{(n_1, n_2)}=1$ is represented
in $\Lambda_{n_1, n_2}$ by the point
$\lambda\,_{k_1,k_2  ; \; n_1,n_2}$ $\equiv$ $(w\,_{l_1, l_2})\,_{\substack{l_1\leq n_1\\
l_2\leq n_2}}$,  having coordinates
\[
w\,_{l_1, l_2} = \left\{
\begin{array}{ll}
0 & \mbox{whenever } l_i > {k}_i \mbox{ for any } i=1, 2\\
\displaystyle\frac{({k}_1)_{l_1} ({k}_{2})_{l_{2}} }{({n}_1)_{l_1}
({n}_{2})_{l_{2}}} & \mbox{elsewhere }
\end{array}
\right.
\]
The points $\left\{ \lambda\,_{k_1,k_2  ; \; n_1,n_2}
\right\}_{\substack{k_1\leq n_1\\ k_2\leq n_2}}$, are affinely
independent, then  $\Lambda_{n_1,n_2}$, which is their convex
hull, is a $(n_1+1)(n_2+1)-1$ dimensional convex polytope with
$(n_1+1)(n_2+1)$ vertices, i.e. a non--standard simplex.

We say that a $(n_1, n_2)$--DFPE sequence is (at least)
$(r_1,r_2)$--extendible, $r_i\geq n_i$, if it is the initial
segment of a $(r_1, r_2)$--DFPE sequence. So the sequence,
represented by the point $w \equiv (w\,_{l_1,
l_2})\,_{\substack{l_1\leq n_1\\ l_2\leq n_2}}$ in $\Lambda_{n_1,
n_2}$, is $(r_1,r_2)$--extendible if, and only if, there exist a
point $w^* \equiv (w\,_{k_1, k_2})\,_{\substack{k_1\leq r_1\\
k_2\leq r_2}}$ in $\Lambda_{r_1, r_2}$ such that its orthogonal
projection over the coordinates of $\Lambda_{n_1, n_2}$ coincide
with $w$. That is, denote as $\Lambda_{r_1, r_2}^{(n_1, n_2)}$ the
projection of $\Lambda_{r_1, r_2}$ over the coordinates of
$\Lambda_{n_1, n_2}$, and as $\lambda\,_{k_1,k_2  ; \;
r_1,r_2}^{(n_1,n_2)}$ the analogous projection of
$\lambda\,_{k_1,k_2  ; \; r_1,r_2}$. Then $\Lambda_{r_1,
r_2}^{(n_1, n_2)}$ is exactly the subspace of $\Lambda_{n_1, n_2}$
representing the $(n_1,n_2)$--DFPE distributions which are at
least $(r_1, r_2)$--extendible and it results as the convex hull
of the $\left\{ \lambda\,_{k_1,k_2  ; \; r_1,r_2}^{(n_1,n_2)}
\right\}_{\substack{k_1\leq r_1\\ k_2\leq r_2}}$. Moreover, we are
going to see that none of this point is redundant with respect to
the convex hull problem, that is they are exactly the vertices of
$\Lambda_{r_1,  r_2}^{(n_1, n_2)}$.
\begin{thm}\label{thm: representation partiall}
\begin{equation}\label{eq: representation partiall}
\begin{split}
\lambda\,_{k_1,k_2 \,;\, r_1,r_2}^{(n_1,n_2)} & =  \frac{r_1-k_1}{r_1} \; \lambda\,_{k_1,k_2 \,;\, r_1-1,r_2}^{(n_1,n_2)} + \frac{k_1}{r_1} \; \lambda\,_{k_1-1,k_2 \,;\, r_1-1,r_2}^{(n_1,n_2)}\\
                                              & =  \frac{r_2-k_2}{r_2} \; \lambda\,_{k_1,k_2 \,;\, r_1,r_2-1}^{(n_1,n_2)} + \frac{k_2}{r_1} \; \lambda\,_{k_1,k_2-1 \,;\, r_1,r_2-1}^{(n_1,n_2)}
\end{split}
\end{equation}
\end{thm}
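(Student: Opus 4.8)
The plan is to verify \eqref{eq: representation partiall} coordinate by coordinate, using the explicit description of the vertices $\lambda\,_{k_1,k_2;r_1,r_2}$ recorded just above the statement. Recall that the $(l_1,l_2)$--coordinate of $\lambda\,_{k_1,k_2;r_1,r_2}$, hence of its projection $\lambda\,_{k_1,k_2;r_1,r_2}^{(n_1,n_2)}$ for $l_i\le n_i$ and $l_1+l_2>0$, equals
\[
\frac{(k_1)_{l_1}\,(k_2)_{l_2}}{(r_1)_{l_1}\,(r_2)_{l_2}},
\]
if we adopt the convention $(k)_l=0$ whenever $0\le k<l$ are integers; this makes the ``$0$ whenever $l_i>k_i$'' clause automatic, so that a single formula covers every coordinate. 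Since \eqref{eq: representation partiall} is an equality between points of one and the same coordinate space (all three projections are vectors indexed by the coordinates with $l_i\le n_i$, which requires $n_i\le r_i-1$, as holds in the extendibility regime), it suffices to establish it in each coordinate; and by the symmetry between the two groups only the first line needs a proof.

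Fix a coordinate $(l_1,l_2)$ with $l_i\le n_i$. The first line leaves the second index untouched, so the common factor $\tfrac{(k_2)_{l_2}}{(r_2)_{l_2}}$ divides out of all three terms --- legitimately even when it vanishes, in which case both sides are $0$ --- and the claim reduces to the one--variable identity
\[
\frac{(k_1)_{l_1}}{(r_1)_{l_1}}
=\frac{r_1-k_1}{r_1}\cdot\frac{(k_1)_{l_1}}{(r_1-1)_{l_1}}
+\frac{k_1}{r_1}\cdot\frac{(k_1-1)_{l_1}}{(r_1-1)_{l_1}}.
\]
If $l_1=0$ both sides are $1$, since $\tfrac{r_1-k_1}{r_1}+\tfrac{k_1}{r_1}=1$; and if $k_1=0$ the last term on the right is $0$ and, for $l_1\ge 1$, the identity reads $0=0+0$. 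So assume $l_1\ge 1$ and $k_1\ge 1$; in the regime relevant to extendibility we also have $l_1\le n_1<r_1$, hence $r_1-l_1\ge 1$. Now the elementary factorial cancellations
\[
\frac{(r_1-1)_{l_1}}{(r_1)_{l_1}}=\frac{r_1-l_1}{r_1},
\qquad
\frac{(k_1-1)_{l_1}}{(k_1)_{l_1}}=\frac{k_1-l_1}{k_1}
\]
let us rewrite the right--hand side as
\[
\frac{(k_1)_{l_1}}{(r_1)_{l_1}}\cdot\frac{r_1}{r_1-l_1}
\left(\frac{r_1-k_1}{r_1}+\frac{k_1}{r_1}\cdot\frac{k_1-l_1}{k_1}\right)
=\frac{(k_1)_{l_1}}{(r_1)_{l_1}}\cdot\frac{r_1}{r_1-l_1}\cdot\frac{r_1-l_1}{r_1},
\]
which is exactly the left--hand side. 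The second line of \eqref{eq: representation partiall} follows by the identical computation with the roles of the two groups interchanged.

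I do not anticipate any genuine obstacle: the argument is a short computation, and the only care needed is the bookkeeping with falling factorials under the vanishing convention, together with the degenerate cases $l_i=0$ and $k_i=0$, handled above. It is worth recording the probabilistic meaning of the formula, which gives a conceptual derivation: $\lambda\,_{k_1,k_2;r_1,r_2}$ is the joint law of two independent samplings without replacement --- from an urn of $r_1$ tickets of which $k_1$ bear a $1$, and an urn of $r_2$ tickets of which $k_2$ bear a $1$ --- so \eqref{eq: representation partiall} is the one--step backward recursion obtained by conditioning on the ticket occupying the last position drawn from the first (resp.\ second) urn, which bears a $1$ with probability $k_1/r_1$ and a $0$ with probability $(r_1-k_1)/r_1$; since the coordinates $(l_1,l_2)$ with $l_1\le n_1<r_1$ involve only the remaining positions, this conditioning yields precisely \eqref{eq: representation partiall}. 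Either route works, but the coordinatewise check above is the most direct.
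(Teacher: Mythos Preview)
Your argument is correct, and it proceeds differently from the paper. The paper works at the level of the $w^{(r_1,r_2)}$--parameterisation: starting from the elementary marginalisation $p_{k_1,k_2}^{(n_1,n_2)}=p_{k_1,k_2}^{(n_1+1,n_2)}+p_{k_1+1,k_2}^{(n_1+1,n_2)}$ and \eqref{eq: partialp partialq}, it observes that the extremal distribution with $w_{k_1,k_2}^{(r_1,r_2)}=1$ marginalises to the two--point mixture $w_{k_1,k_2}^{(r_1-1,r_2)}=(r_1-k_1)/r_1$, $w_{k_1-1,k_2}^{(r_1-1,r_2)}=k_1/r_1$, and then lets the linear map \eqref{eq: partialq partiall} carry the identity to the $\Lambda$--coordinates. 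You instead substitute the explicit hypergeometric coordinate formula and verify the falling--factorial identity by hand. Your closing ``probabilistic meaning'' paragraph is essentially the paper's argument, so you have in fact given both proofs; the coordinatewise one is more self--contained (no appeal to \eqref{eq: partialp partialq} or \eqref{eq: partialq partiall}), while the paper's makes the hypergeometric marginalisation explicit and transfers more readily to the Markov--exchangeable analogue \eqref{eq: relation points markovl}. One small bookkeeping remark: the cancellation $(k_1-1)_{l_1}/(k_1)_{l_1}=(k_1-l_1)/k_1$ tacitly assumes $k_1\ge l_1$; for $1\le k_1<l_1$ both sides of the target identity vanish, so this case is harmless but deserves a word alongside the $l_1=0$ and $k_1=0$ cases you already treat.
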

\begin{proof}
We have
\begin{align}
p\,_{k_1,k_2}^{(n_1,n_2)} & = p\,_{k_1,k_2}^{(n_1+1,n_2)} + p\,_{k_1+1,k_2}^{(n_1+1,n_2)}\label{eq: partialp partialp 1}\\
                          & = p\,_{k_1,k_2}^{(n_1,n_2+1)} + p\,_{k_1,k_2+1}^{(n_1,n_2+1)}\label{eq: partialp partialp 2}
\end{align}
The point $\lambda_{k_1,k_2 \,;\, r_1,r_2}$ represents the
distribution having $w\,_{k_1,k_2}^{(r_1,r_2)}=1$. Any term
$p\,_{k_1,k_2}^{(r_1,r_2)}$ appears  in the right hand side of
exactly one equation of the kind \eqref{eq: partialp partialp 1}
and one of the kind  \eqref{eq: partialp partialp 2}. Then, by
\eqref{eq: partialp partialq} it is easily seen that if
$w\,_{k_1,k_2}^{(r_1,r_2)}=1$, it is
\[
w\,_{k_1,k_2}^{(r_1-1,r_2)} = \frac{r_1-k_1}{r_1},\:
w\,_{k_1-1,k_2}^{(r_1-1,r_2)} = \frac{k_1}{r_1},\:
w\,_{k_1,k_2}^{(r_1,r_2-1)} = \frac{r_2-k_2}{r_2},\:
w\,_{k_1,k_2-1}^{(r_1,r_2-1)} = \frac{k_2}{r_2}
\]
then the statement follows by \eqref{eq: partialq partiall}.
\end{proof}

\begin{prop}\label{prop: edges}
Consider a polytope $A$ and a set of points  lying on distinct
edges of $A$. Call $A'$ their convex hull. Then those points  are
the  vertices of $A'$.
\end{prop}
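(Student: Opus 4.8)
The plan is to read the hypothesis in the sense that each given point lies in the \emph{relative interior} of a distinct edge of $A$ — so that no such point is a vertex of $A$ and each of them determines its edge uniquely — and then to show that every one of these points is an extreme point of $A'$. Since the vertices of the polytope $A' = \mathrm{conv}(S)$ are exactly its extreme points, and the extreme points of the convex hull of a finite set always lie in that set, this gives precisely the claim that $S$ is the vertex set of $A'$. Write $S = \{p_1, \dots, p_m\}$ with $p_i$ in the relative interior of an edge $e_i$ of $A$, the $e_i$ pairwise distinct.

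Fix an index $i$ and suppose $p_i = \sum_j t_j q_j$ with all $q_j \in S$, $t_j > 0$ and $\sum_j t_j = 1$; I want to conclude $q_j = p_i$ for every $j$. First I would invoke the fact that every face of a polytope is exposed: there exist a linear functional $\ell$ and a constant $c$ with $\ell(x) \le c$ for all $x \in A$ and $\{x \in A : \ell(x) = c\} = e_i$ (one may take $\ell$ and $c$ by summing the facet inequalities of $A$ that are active along $e_i$). Evaluating $\ell$ on the convex combination gives $c = \ell(p_i) = \sum_j t_j\, \ell(q_j) \le \sum_j t_j c = c$; as each $t_j$ is strictly positive, every inequality $\ell(q_j) \le c$ is forced to be an equality, so $q_j \in e_i$ for all $j$. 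Next, each $q_j$ is a point of $S$, hence not a vertex of $A$, whereas the two endpoints of $e_i$ are vertices of $A$; therefore $q_j$ lies in the relative interior of $e_i$. Since the relative interiors of the faces of a polytope partition it, the edge carrying $q_j$ can only be $e_i$; but distinct points of $S$ carry distinct edges and $p_i$ carries $e_i$, so $q_j = p_i$. Thus $p_i$ admits no nontrivial representation as a convex combination of points of $S$ and is therefore an extreme point of $A'$, which is what was needed.

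The step I expect to be the real obstacle is forcing all the summands $q_j$ onto the single edge $e_i$: this is the only place the polytope hypothesis is genuinely used, through the fact that a face is cut out by a supporting hyperplane, so that membership of the value $p_i$ in that face drags every summand into it; the rest is bookkeeping with the "unique face through a relative-interior point" decomposition. I would also note, for completeness, that if the points are allowed to be endpoints of their edges the statement fails in general — for instance, two adjacent vertices of a square together with the midpoint of the edge joining them lie on three distinct edges, yet that midpoint is not a vertex of their convex hull — so the relative-interior reading is the one that is both intended and used in Section~\ref{sec: ext part}.
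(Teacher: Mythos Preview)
Your argument is correct and follows the same idea as the paper's proof: use the face property of the edge $e_i$ (which you make explicit via a supporting hyperplane, while the paper simply asserts that a point on $e$ can only be a convex combination of points of $A$ lying in $e$) to force all summands onto $e_i$, and then invoke the distinct-edges hypothesis to conclude. Your explicit relative-interior reading and the accompanying counterexample are a useful clarification that the paper leaves implicit and only addresses later, in the proof of the subsequent theorem.
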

\begin{proof}
Say one of those point $v$ lies on the edge $e$ of $A$. Then it
can only be represented as  convex combinations of points in $e$,
and no other points in $A$. But $v$ is the only point of $A'$
lying on $e$ and obviously $A' \subset A$, then $v$ cannot be
represented as convex combinations of any other points in $A'$,
and hence is a vertex.
\end{proof}
\begin{thm}
\textbf{a$)$} The  $\left\{ \lambda\,_{k_1,k_2  ; \;
r_1,r_2}^{(n_1,n_2)} \right\}_{\substack{k_1\leq r_1\\ k_2\leq
r_2}}$  are the vertices of $\Lambda\,_{r_1,r_2}^{(n_1,n_2)}$.\\
\textbf{b$)$} Each pair of points in the right hand side of
\eqref{eq: representation partiall} constitute the vertices of an
edge of their own space.
\end{thm}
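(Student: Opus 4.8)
The plan is to prove parts \textbf{a)} and \textbf{b)} together, building on Theorem \ref{thm: representation partiall} and Proposition \ref{prop: edges}. The key observation is that the recursion \eqref{eq: representation partiall} exhibits each $\lambda\,_{k_1,k_2\,;\,r_1,r_2}^{(n_1,n_2)}$ as a proper convex combination (both coefficients strictly positive when $0<k_i<r_i$, and with the obvious degeneracies at the boundary) of two points that live one ``level'' down in $(r_1,r_2)$. So the strategy is an induction on $r_1+r_2$ starting from the base case $(r_1,r_2)=(n_1,n_2)$, where $\Lambda\,_{n_1,n_2}^{(n_1,n_2)}=\Lambda\,_{n_1,n_2}$ is already known to be a non-standard simplex with the $\lambda\,_{k_1,k_2\,;\,n_1,n_2}$ as its affinely independent vertices.

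First I would settle \textbf{b)} at each inductive step: assuming the points $\left\{\lambda\,_{k_1,k_2\,;\,r_1-1,r_2}^{(n_1,n_2)}\right\}$ (resp. with $r_2-1$) are the vertices of the polytope $\Lambda\,_{r_1-1,r_2}^{(n_1,n_2)}$ (resp. $\Lambda\,_{r_1,r_2-1}^{(n_1,n_2)}$), I must check that the two specific vertices appearing on the right of \eqref{eq: representation partiall}, namely $\lambda\,_{k_1,k_2\,;\,r_1-1,r_2}^{(n_1,n_2)}$ and $\lambda\,_{k_1-1,k_2\,;\,r_1-1,r_2}^{(n_1,n_2)}$, are joined by an actual edge of that polytope, not merely a chord. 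This is where I expect the main obstacle to lie. The cleanest route is to exhibit a supporting hyperplane (a linear functional on the $w\,_{l_1,l_2}$ coordinates) that is maximized on $\Lambda\,_{r_1-1,r_2}^{(n_1,n_2)}$ exactly at those two vertices. Since the vertices have the explicit form $w\,_{l_1,l_2}=\frac{(k_1)_{l_1}(k_2)_{l_2}}{(r_1-1)_{l_1}(r_2)_{l_2}}$ (truncated to zero past $k_i$), a functional that reads off, say, the coordinate $w\,_{k_2}$ in the ``second group only'' slice while penalising the first-group indices should separate the face $\{k_1,\,k_1-1\}\times\{k_2\}$ from the rest; I would verify this by direct comparison of coordinate values, using monotonicity of $l\mapsto (k)_l/(m)_l$ in $k$. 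An alternative, possibly slicker, argument: the map $\lambda\,_{k_1,k_2\,;\,r_1,r_2}\mapsto$ (the ordered pair $(k_1,k_2)$) makes $\Lambda\,_{r_1,r_2}$ combinatorially a product of two simplices $\Delta^{r_1}\times\Delta^{r_2}$, whose edges connect vertices differing in a single coordinate by $1$; one then checks that orthogonal projection onto the $\Lambda\,_{n_1,n_2}$-coordinates preserves this edge structure, which reduces \textbf{b)} to a statement about projections of products of simplices.

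Once \textbf{b)} is in hand at level $(r_1,r_2)$, part \textbf{a)} follows immediately from Proposition \ref{prop: edges}: the points $\left\{\lambda\,_{k_1,k_2\,;\,r_1,r_2}^{(n_1,n_2)}\right\}_{k_1\le r_1,\,k_2\le r_2}$ each lie on a distinct edge of the polytope $\Lambda\,_{r_1-1,r_2}^{(n_1,n_2)}$ — distinctness because the edge $\left[\lambda\,_{k_1-1,k_2\,;\,r_1-1,r_2}^{(n_1,n_2)},\lambda\,_{k_1,k_2\,;\,r_1-1,r_2}^{(n_1,n_2)}\right]$ is determined by the index pair $(k_1,k_2)$, and \eqref{eq: representation partiall} forces $\lambda\,_{k_1,k_2\,;\,r_1,r_2}^{(n_1,n_2)}$ to be the unique interior point of $\Lambda\,_{r_1,r_2}^{(n_1,n_2)}$ on that edge — so Proposition \ref{prop: edges} identifies them as the vertices of their convex hull, which is exactly $\Lambda\,_{r_1,r_2}^{(n_1,n_2)}$ by the discussion preceding Theorem \ref{thm: representation partiall}. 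I would close by noting that the two representations in \eqref{eq: representation partiall} (one recursing in $r_1$, one in $r_2$) give two systems of edges, and the induction can be driven along either, which also shows the edge structure is consistent; the boundary cases $k_i\in\{0,r_i\}$, where one coefficient vanishes and $\lambda\,_{k_1,k_2\,;\,r_1,r_2}^{(n_1,n_2)}$ coincides with a single lower-level vertex, are handled separately but trivially, as they simply say certain vertices persist unchanged under the projection.
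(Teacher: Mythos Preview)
Your inductive framework---base case the simplex $\Lambda_{n_1,n_2}$, then step from $(r_1-1,r_2)$ (or $(r_1,r_2-1)$) to $(r_1,r_2)$ via Proposition \ref{prop: edges}---is exactly the paper's scheme, and your derivation of \textbf{a)} from \textbf{b)}, the distinct-edge check, and the treatment of the boundary cases $k_i\in\{0,r_i\}$ all match. The substantive difference is in how \textbf{b)} is argued. The paper does not build a supporting hyperplane; instead it observes that the two consecutive new points $\lambda_{k_1,k_2\,;\,r_1,r_2}^{(n_1,n_2)}$ and $\lambda_{k_1+1,k_2\,;\,r_1,r_2}^{(n_1,n_2)}$ lie on two \emph{adjacent} edges of the lower polytope $\Lambda_{r_1-1,r_2}^{(n_1,n_2)}$ sharing the common vertex $\lambda_{k_1,k_2\,;\,r_1-1,r_2}^{(n_1,n_2)}$, and that by \eqref{eq: representation partiall} \emph{no other} new point uses that common vertex in its representation; from this it concludes the segment is an edge. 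This shared-vertex argument is shorter than constructing an explicit functional, and it is what actually propagates through the induction. Your hyperplane route is sound in principle, but you have not carried it out, and the functional you sketch (``read off $w_{k_2}$ in the second-group slice while penalising first-group indices'') would still need to be tuned so that it attains the \emph{same} value at the two target vertices while staying strictly below at all others---a computation you have not done.

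One genuine error: your alternative ``product of simplices'' idea does not work. The polytope $\Lambda_{r_1,r_2}$ is a full $(r_1+1)(r_2+1)-1$--dimensional simplex with affinely independent vertices, not combinatorially $\Delta^{r_1}\times\Delta^{r_2}$ (which has dimension only $r_1+r_2$ and whose vertex set is a product, whereas here the vertex \emph{index} set is a product but the vertices themselves are affinely independent). The projections $\Lambda_{r_1,r_2}^{(n_1,n_2)}$ are not products either---indeed the paper notes they are not even simplicial---so this shortcut cannot replace the edge argument.
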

\begin{proof}
$\Lambda\,_{n_1,n_2}$ is a simplex, so each couple of its vertices
identifies an edge. By \eqref{eq: representation partiall}, the
points $\lambda\,_{k_1,k_2 \, ; \, n_1+1,n_2}^{(n_1,n_2)}$ of
$\Lambda\,_{n_1+1,n_2}^{(n_1,n_2)}$ lie on distinct edges of
$\Lambda\,_{n_1,n_2}$ and by Proposition \ref{prop: edges} are all
vertices of $\Lambda\,_{n_1+1,n_2}^{(n_1,n_2)}$. Moreover, each
couple of vertices of $\Lambda\,_{n_1+1,n_2}^{(n_1,n_2)}$ of the
kind $\lambda\,_{k_1,k_2\,;\, n_1+1,n_2}^{(n_1,n_2)}$,
$\lambda\,_{k_1+1,k_2\,;\, n_1+1,n_2}^{(n_1,n_2)}$ lie on two
adjacent  edges of $\Lambda\,_{n_1,n_2}$ having the vertex
$\lambda\,_{k_1,k_2\,;\, n_1,n_2}$ in common, and no other vertex
of $\Lambda\,_{n_1+1,n_2}^{(n_1,n_2)}$ has
$\lambda\,_{k_1,k_2\,;\, n_1,n_2}$ in its representation
\eqref{eq: representation partiall}. So they identify an edge of
$\Lambda\,_{n_1+1,n_2}^{(n_1,n_2)}$. To be precise, all the points
$\lambda\,_{k_1,k_2 \, ; \, n_1+1,n_2}^{(n_1,n_2)}$ having $k_1=0$
or $k_1= n_1+1$ coincide with vertices of $\Lambda\,_{n_1,n_2}$.
However, as we have said, there are not three points having a
common vertex of $\Lambda\,_{n_1,n_2}$ in their representation
\eqref{eq: representation partiall}, so they are vertices of
$\Lambda\,_{n_1+1,n_2}^{(n_1,n_2)}$ as well. In conclusion, a) and
b) are valid for $r_1=n_1+1$, and obviously also for $r_2=n_2+1$.
It is easily seen that, if we suppose a) and b) hold for
$\Lambda\,_{r_1,r_2}^{(n_1,n_2)}$, then they also hold for
$\Lambda\,_{r_1+1,r_2}^{(n_1,n_2)}$ and
$\Lambda\,_{r_1,r_2+1}^{(n_1,n_2)}$, so the theorem is proved by
induction.
\end{proof}
In conclusion, an $(n_1,n_2)$--DFPE distribution, represented by a
point $w$ in $\Lambda_{n_1,n_2}$, is at least
$(r_1,r_2)$--extendible if, and only if, $w$ is contained in
$\Lambda\,_{r_1,r_2}^{(n_1,n_2)}$, and is exactly
$(r_1,r_2)$--extendible if $\Lambda\,_{r_1+1,r_2}^{(n_1,n_2)}$ and
$\Lambda\,_{r_1,r_2+1}^{(n_1,n_2)}$ do not contain $w$.



Note that, by virtue of \eqref{eq: partialq partialq} we can  map
the extremal points of $\diamondsuit_{(r_1+1)(r_2+1)}$ and find
the subspace of $\diamondsuit_{(n_1+1)(n_2+1)}$ representing the
$(n_1,n_2)$--DFPE distribution that are at least
$(r_1,r_2)$--extendible. But the probabilities
$w\,_{k_1,k_2}^{(n_1,n_2)}$ depend on $n_1$ and $n_2$, so we
should obtain the vertices of the subspaces for each couple
$(n_1,n_2)$. On the converse, the probabilities $w\,_{k_1,k_2}$ do
not depend on the sequence size, and once we know the vertices of
$\Lambda\,_{r_1,r_2}$ we can obtain the vertices of
$\Lambda\,_{r_1,r_2}^{(n_1,n_2)}$ for every $n_1 < r_1$, $n_2 <
r_2$ simply excluding certain coordinates.

The points in $\Lambda\,_{r,0}^{(n,0)}$ represents the
$n$--exchangeable distributions that are at least $r$--extendible.
The $(n-1)$--dimensional faces of an $n$--dimensional polytope are
said facets. A polytope is said simplicial if all its facets are
simplexes. Crisma in \cite{Crisma82} demonstrated that the
$\Lambda\,_{r,0}^{(n,0)}$ are simplicial and their vertices
satisfy  Gale Evenness Condition (a combinatorial property
characterizing the facets). As a consequence, we can easily
determine if a point lies inside any $\Lambda\,_{r,0}^{(n,0)}$.
Moreover, Crisma has been able to compute their volumes,
determining in some sense the proportion of $n$--exchangeable
sequences that are $r$--extendible.

Unfortunately, the  $\Lambda\,_{r_1,r_2}^{(n_1,n_2)}$ are not
simplicial polytopes, and we have not found an analytical way to
determine their facets. Then, to determine if a point $w$ lies
inside a certain polytope $\Lambda\,_{r_1,r_2}^{(n_1,n_2)}$ we can
use the following linear program:
\begin{equation}\label{eq: LP}
\begin{split}
  \text{maximize }  & \quad z^T w -z_0  = f                               \\
  \text{subject to }& \quad z^T \lambda - z_0 \leq 0  \qquad \forall \; \lambda \in \left\{ \lambda\,_{k_1,k_2\,;\, r_1,r_2}^{(n_1,n_2)} \right\}_{\substack{k_1\leq r_1\\ k_2\leq r_2}}\\
                    & \quad z^T w -z_0 \leq 1
\end{split}
\end{equation}
where $z \in \mathbb{R}^{(n_1+1)(n_2+1)}$ and $z_0 \in
\mathbb{R}$. The last inequality is artificially added so that the
linear program has a bounded solution. The optimal value $f$ is
positive if and only if there exists an hyperplane $\{x \in
\mathbb{R}^{(n_1+1)(n_2+1)} \: : \:  z^T x = z_0\}$ separating the
polytope $\Lambda\,_{r_1,r_2}^{(n_1,n_2)}$ and $w$, i.e. if and
only if $w$ lies outside of $\Lambda\,_{r_1,r_2}^{(n_1,n_2)}$.

\subsubsection{$\infty$--extendible case}

If all the $g$  subsequences of a DFPE sequence are
$\infty$--extendible, there exists a probability measure $\nu$
over the $g$--dimensional hypercube $[0,1]^g$ and a r.v.
$\Theta=\big(\theta(1), \ldots ,\theta(g)\big)$ distributed
accordingly such that
\begin{equation}\label{eq: mixture partial}
w\,_{k_1, \ldots  ,k_g}^{(n_1, \ldots  ,n_g)} =
\binom{n_1}{k_1}\cdots \binom{n_g}{k_g} \int_0^1 \!\!\cdots
\!\int_0^1 \prod_{i=1}^g \theta(i)^{k_i}
\big(1-\theta(i)\big)^{n_i-k_i} \; d \nu (\Theta)
\end{equation}
So, the probabilities $w\,_{k_1, \ldots ,k_g}$ are the ordinary
mixed moments of the mixing measure $\nu$:
$E_{\nu}\Big[\theta(1)^{k_1} \cdots \theta(g)^{k_g}\Big]$. Let
$\mathcal{M}^{(n_1, \ldots  ,n_g)}$ be the space of the mixed
moments up to order $(n_1, \ldots ,n_g)$ of all the probability
measures over $[0,1]^g$. For what we have said,
$\{\Lambda\,_{r_1,\ldots,r_g}^{(n_1, \ldots ,n_g)}\}_{r_1,\ldots
,r_g}$ is a decreasing multisequence  of polytopes and we have
\[
\bigcap_{r_1=n_1}^{\infty}\cdots \bigcap_{r_g=n_g}^{\infty}
\Lambda\,_{r_1,\ldots,r_g}^{(n_1, \ldots ,n_g)} =
\mathcal{M}^{(n_1, \ldots  ,n_g)}
\]
As far as I know there is no practical criterion to establish if a
point of $\mathbb{R}^{(n_1+1)\cdots (n_g+1)}$ lies inside
$\mathcal{M}^{(n_1, \ldots ,n_g)}$. Then we can check some simple
necessary conditions for $\infty$--extendibility using moments'
inequalities.

Formulas \eqref{eq: partiall partialcov} and \eqref{eq: partialcov
partiall}  link the ordinary mixed moments and the central mixed
moments of a multivariate distribution (see e.g.
\cite{Johnsonetal97}, equations (34.28) (34.29)), consequently we
have:
\[
Cov\,_{k_1, \ldots  ,k_g}=
E_{\nu}\left[\big(\theta(1)-E_{\nu}[\theta(1)]\big)^{k_1} \cdots
\big(\theta(g)-E_{\nu}[\theta(g)]\big)^{k_g}\right]
\]
So, a simple necessary condition for a representation of the kind
\eqref{eq: mixture partial} to hold is
\begin{equation}\label{eq: necessary for Cov}
Cov\,_{2 k_1, \ldots  ,2 k_g}\geq 0 \qquad \forall \;
k_i=1,\ldots, \lfloor n_i/2 \rfloor, \quad i=1,\ldots,g
\end{equation}

To simplify the notation, denote as $Cov(i,j)$ the covariance
between a r.v. of the $i$--th group and one of the $j$--th group,
i.e. the value $Cov\,_{k_1, \ldots ,k_g}$ when $k_i=k_j=1$ and all
other subscripts are 0. Another simple necessary condition for
\eqref{eq: mixture partial} to hold is that, in that case,
for what we have said, $\big\{Cov(i,j)\big\}_{\substack{1\leq i \leq g\\
1\leq j \leq g}}$ is  the Variance--Covariance matrix of $\Theta$
and hence must be nonnegative definite.
\begin{exm}
The $(2,2)$--DFPE distribution defined by the following values of
$w\,_{k_1,k_2}^{(2,2)}$:
\begin{center}
\begin{tabular}{ c|c c c}
  $k_1 \backslash k_2$ & 0 & 1 & 2 \\
  \hline
  0 & $\frac{3}{16}$ & $\frac{3}{16}$ &  $0$   \\
  1 & $\frac{1}{16}$ & $\frac{3}{16}$ &  $0$   \\
  2 & $\frac{1}{16}$ &       $0$        & $\frac{5}{16}$ \\
\end{tabular}
\end{center}
by \eqref{eq: partialq partiall} leads to the following values of
$w\,_{k_1,k_2}$:
\begin{center}
\begin{tabular}{ c|c c c}
  $k_1 \backslash k_2$ & 0 & 1 & 2 \\
  \hline
  0 &         $1$        &  $\frac{1}{2}$   &  $\frac{5}{16}$ \\
  1 &    $\frac{1}{2}$ &  $\frac{23}{64}$ &  $\frac{5}{16}$ \\
  2 &    $\frac{3}{8}$ &  $\frac{5}{16}$  &  $\frac{5}{16}$ \\
\end{tabular}
\end{center}
and by \eqref{eq: partiall partialcov} we have
\[
Cov\,_{2,0}=1/8, \quad Cov\,_{0,2}=1/16, \quad Cov\,_{2,2}=1/32
\]
so \eqref{eq: necessary for Cov} is satisfied. But $Cov\,_{2,0}\:
Cov\,_{0,2} - Cov\,_{1,1}^2 = -\frac{17}{4096} < 0$, so
$\smx{Cov\,_{2,0} & Cov\,_{1,1}\\ Cov\,_{1,1} & Cov\,_{0,2} }$ is
not nonnegative definite and the distribution is not
$(\infty,\infty)$--extendible. The linear program \eqref{eq: LP}
reveals that the point of $\Lambda_{2,2}$ representing the
distribution lies in $\Lambda_{4,2}^{(2,2)}$, but not in
$\Lambda_{5,2}^{(2,2)}$ nor in $\Lambda_{2,3}^{(2,2)}$. Hence the
distribution is exactly $(4,2)$--extendible. Note that both the
2--exchangeable subsequences identified respectively by
$(w\,_{1,0}, w\,_{2,0})$ and by $(w\,_{0,1}, w\,_{0,2})$, are
$\infty$--extendible.
\end{exm}

\section{{\large Markov exchangeability}\label{sec: mark exb}}

Consider an $I$--valued  sequence $(x_1,\ldots ,x_n)$. Define its
transition counts $n\,_{i,j}$ for all $i$, $j$ in $I$ as
\[
n\,_{i,j} = \sum_{k=1}^{n-1} \mathbbm{1}_{(i,j)}(x_k, x_{k+1})
\]
and arrange them in a matrix $N=\{n\,_{i,j}\}_{i,j}$. Then, the
distribution of $(X_1,\ldots ,X_n)$ is Markov exchangeable
(hereafter ME or $n$--ME if we need to highlight the number of
variables) when the sufficient statistic $T$ in \eqref{eq: T
partexb} is the value of the first step $x_1$, together with the
transition count matrix $N$. Introduce the number of transitions
exiting from $i$: $n_{i}^+ = \sum_{j \in I} n\,_{i,j}$ and  the
number of transitions entering in $i$: $n_{i}^- = \sum_{j \in I}
n\,_{j,i}$.
\begin{prop}
Consider  an $I$--valued  sequence $(x_1,\ldots ,x_n)$. Then, it
is $x_1=x_n$ if, and only if
\begin{equation}\label{eq: 1 kind}
n^+_i = n^-_i \qquad \forall i \in I
\end{equation}
while it is $x_1\neq x_n$ if, and only if
\begin{equation}\label{eq: 2 kind}
\left\{
\begin{array}{l}
n^+_{x_1}  =  n^-_{x_1} + 1\\
n^-_{x_n} = n^+_{x_n} + 1\\
n^+_{i} = n^-_{i} \quad \mbox{for}\: i \neq x_1 \neq x_n\\
\end{array}
\right.
\end{equation}
Moreover, an integer valued matrix $N=\{n\,_{i,j}\}_{i,j}$ is a
consistent transition count matrix if, and only if, it is
irreducible and one between \eqref{eq: 1 kind} and \eqref{eq: 2
kind} is valid.
\end{prop}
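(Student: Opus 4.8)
The plan is to handle the two assertions separately, the first by a direct counting identity and the second by reducing to Euler's theorem for directed multigraphs.

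For the characterization of $x_1=x_n$ versus $x_1\neq x_n$, write $c_i$ for the number of indices $k\in\{1,\ldots,n\}$ with $x_k=i$. Since $n^+_i$ counts the occurrences of $i$ among $x_1,\ldots,x_{n-1}$ and $n^-_i$ counts the occurrences of $i$ among $x_2,\ldots,x_n$, one has $n^+_i=c_i-\mathbbm{1}_{\{x_n=i\}}$ and $n^-_i=c_i-\mathbbm{1}_{\{x_1=i\}}$, whence
\[
n^+_i-n^-_i=\mathbbm{1}_{\{x_1=i\}}-\mathbbm{1}_{\{x_n=i\}}\qquad\forall\, i\in I.
\]
If $x_1=x_n$ the right-hand side vanishes for every $i$, giving \eqref{eq: 1 kind}; if $x_1\neq x_n$ it equals $+1$ for $i=x_1$, $-1$ for $i=x_n$, and $0$ otherwise, giving \eqref{eq: 2 kind}. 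The systems \eqref{eq: 1 kind} and \eqref{eq: 2 kind} are mutually exclusive and the alternatives $x_1=x_n$, $x_1\neq x_n$ are exhaustive, so each implication reverses; moreover, when \eqref{eq: 2 kind} holds the state with the excess outgoing transition must be $x_1$ and the one with the excess incoming transition must be $x_n$.

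For the last statement, associate with a nonnegative integer matrix $N$ the directed multigraph $G_N$ on vertex set $I$ having $n_{i,j}$ parallel arcs from $i$ to $j$. A sequence $(x_1,\ldots,x_n)\in I^n$ whose transition count matrix is $N$ is exactly a trail in $G_N$ traversing every arc once (an Eulerian trail), its consecutive‑pair counts being the $n_{i,j}$, with $n=1+\sum_{i,j}n_{i,j}$. Hence ``$N$ is a consistent transition count matrix'' is equivalent to ``$G_N$ has an Eulerian trail''. By Euler's theorem for directed multigraphs, this holds if and only if $G_N$ is connected on the vertices incident to some arc and either all vertices satisfy $n^+_i=n^-_i$ or exactly two are unbalanced with $n^+_a=n^-_a+1$ and $n^-_b=n^+_b+1$; by the first part these are conditions \eqref{eq: 1 kind} and \eqref{eq: 2 kind}. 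It then remains only to identify ``connected'' with ``irreducible'': granting \eqref{eq: 1 kind} or \eqref{eq: 2 kind}, adding (in the second case) one arc $b\to a$ renders $G_N$ balanced, and a weakly connected balanced digraph is strongly connected, so connectivity of $G_N$ coincides with irreducibility of $N$.

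For completeness the Eulerian direction actually used can be recalled: in case \eqref{eq: 1 kind} grow a trail greedily from a vertex with positive out‑degree; balance forces it to close, and if arcs remain, connectivity yields a visited vertex with an unused outgoing arc from which a further closed trail is spliced in, the process terminating when all arcs are spent. In case \eqref{eq: 2 kind} add the arc $b\to a$, apply the previous case to get an Eulerian circuit, and delete the added arc to obtain an Eulerian trail from $a$ to $b$; reading off successive vertices gives the required sequence. The only genuinely delicate point is this connectivity bookkeeping — distinguishing weak from strong connectivity and disregarding states of $I$ not visited by the sequence — everything else being the classical Eulerian argument together with the elementary identity of the second paragraph.
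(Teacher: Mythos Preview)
Your proof is correct and follows the same route as the paper: both identify the sequence with an Eulerian trail in the directed multigraph whose adjacency matrix is $N$ and invoke Euler's theorem. The paper's argument is extremely terse (it simply observes that the graph is Eulerian by construction and says the result follows), whereas you supply the details the paper omits --- the explicit counting identity $n^+_i-n^-_i=\mathbbm{1}_{\{x_1=i\}}-\mathbbm{1}_{\{x_n=i\}}$ for the first assertion, the careful reconciliation of weak connectivity with irreducibility under the balance conditions, and a sketch of the Hierholzer-type construction --- so your version is a fleshed-out form of the same proof rather than a different one.
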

\begin{proof}
Consider $H=\{(x_1,x_2), \ldots ,(x_{n-1},x_n)\}$ and let $J$  be
the set of the distinct states ($J \subseteq I$) visited by
$(x_1,\ldots ,x_n)$. We can think to $N$ as the adjacency matrix
of the directed graph $G=(J,H)$. But $G$ is Eulerian by
construction, and the result follows immediately.
\end{proof}

Denote as $[x_1,N]$  the set of all the  $I$--valued $n$--tuples
starting in $x_1$ and having a transition count $N$. Denote
$P(\textbf{x} \in [x_1,N])$ as $w\,_{x_1,N}$, and the probability
of having any specified  element of $[x_1,N]$ as $p\,_{x_1,N}$.
Denote the set of all the distinct transition count matrices of
all the $I$--valued $n$--tuples starting in $x_1$ as $\Phi(x_1,
n)$. For what we have said,  an $I$--valued $n$--ME distribution
is completely defined by the probabilities $w\,_{x_1,N}$ for $N$
ranging in $\Phi(x_1,n)$ and $x_1$ ranging in $I$ subjected to
\[
\sum_{x_1 \in I} \sum_{N \in \Phi(x_1,n)} w\,_{x_1,N} = 1
\qquad\mbox{and}\qquad \sum_{N \in \Phi(x_1,n)} w\,_{x_1,N} =
P(X_1=x_1)  \qquad \forall \:x_1 \in I
\]
The cardinality of $[x_1,N]$ was first found by Whittle in
\cite{Whittle55}. Define the matrix $B=\{b_{ij}\}_{i,j \in I}$ as
\[
b_{ij}=\left\{
\begin{array}{lc}
-n\,_{i,j}/n^+_i & \mbox{for } i\neq j \\
1 - n\,_{i,i}/n^+_i & \mbox{for } i = j \\
\end{array}
\right.
\]
By \eqref{eq: 1 kind} and \eqref{eq: 2 kind}, if we know the
starting state and the transition counts of a sequence, we also
know its ending state.
\begin{thm}[\cite{Whittle55}]\label{thm: Whittle}
The number of sequences in $[x_1,N]$ is
\[
\det(B_{x_n, x_n}) \; \frac{\prod_i n^+_i!}{\prod_{i,j}
n\,_{i,j}!}
\]
where $x_n$ is uniquely determined by $x_1$ and $N$, and where
$B_{x_n, x_n}$ is the matrix obtained by $B$ removing the
$x_n$--th row and the $x_n$--th column.
\end{thm}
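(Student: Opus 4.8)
The plan is to realize each sequence in $[x_1,N]$ as an Eulerian trail in the directed multigraph $G=(J,H)$ associated with $N$: the vertex set is the set $J$ of visited states, and there are $n\,_{i,j}$ parallel arcs from $i$ to $j$. (States $i$ with $n^+_i=n^-_i=0$ lie outside $J$ and play no role, so we tacitly restrict $B$ to $J$.) By the previous Proposition, $G$ is connected and the balance $d^+(v)=d^-(v)$ holds at every vertex when $x_1=x_n$, and fails exactly at $x_1$ and $x_n$ when $x_1\neq x_n$. First I would remove the open/closed dichotomy: if $x_1=x_n=:w$, a sequence of $[x_1,N]$ is a closed Eulerian walk of $G$ based at $w$; if $x_1\neq x_n$, adjoin one extra arc $e_0$ from $x_n$ to $x_1$, obtaining a connected, balanced multigraph $\widehat G$, and observe that closed Eulerian walks of $\widehat G$ whose first arc is $e_0$ correspond bijectively to Eulerian trails of $G$ from $x_1$ to $x_n$.

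Next I would invoke the BEST theorem (for completeness one may include its short proof via the ``last exit'' construction, which to each Eulerian circuit rooted at $r$ associates the in-arborescence toward $r$ formed by the final out-arc used at each vertex $\neq r$): in a connected balanced digraph the number of Eulerian circuits beginning with a prescribed out-arc of a vertex $r$ equals $A_r\cdot\prod_{v}\bigl(d^+(v)-1\bigr)!$, where $A_r$ is the number of spanning arborescences oriented toward $r$. Applying this with $r=x_n$ — to $\widehat G$ with fixed first arc $e_0$ when $x_1\neq x_n$, and to $G$ when $x_1=x_n$ (summing over the $n^+_w$ choices of first arc and using that a closed walk based at $w$ has $n^+_w$ based rotations, pairwise distinct since its arcs are distinct) — and then passing from arc-distinguished to arc-indistinguished counts by dividing by $\prod_{i,j}n\,_{i,j}!$, both cases collapse to the single identity
\[
\bigl|[x_1,N]\bigr| \;=\; A_{x_n}(G)\cdot\frac{n^+_{x_n}!\,\prod_{i\neq x_n}(n^+_i-1)!}{\prod_{i,j}n\,_{i,j}!},
\]
where we used $n^+_w\,\prod_v(n^+_v-1)!=n^+_w!\,\prod_{v\neq w}(n^+_v-1)!$ in the closed case, together with $A_{x_n}(\widehat G)=A_{x_n}(G)$ in the open case (the added arc leaves the root $x_n$ and so never enters an in-arborescence toward $x_n$, while it raises $d^+(x_n)$ by one).

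It remains to identify $A_{x_n}(G)$ with $\det(B_{x_n,x_n})$ up to the right constant. Writing $D=\operatorname{diag}(n^+_i)$ and $L=D-N$, the definition of $B$ gives $B=D^{-1}L$; since $D$ is diagonal, deleting row and column $x_n$ yields $\det(B_{x_n,x_n})=\det(L_{x_n,x_n})\big/\prod_{i\neq x_n}n^+_i$. By the directed matrix--tree theorem, $\det(L_{x_n,x_n})$ counts the spanning in-arborescences toward $x_n$, i.e. $\det(L_{x_n,x_n})=A_{x_n}(G)$, so $A_{x_n}(G)=\det(B_{x_n,x_n})\prod_{i\neq x_n}n^+_i$. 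Substituting this into the displayed identity and using $\prod_{i\neq x_n}n^+_i\cdot\prod_{i\neq x_n}(n^+_i-1)!\cdot n^+_{x_n}!=\prod_i n^+_i!$ gives $\bigl|[x_1,N]\bigr|=\det(B_{x_n,x_n})\,\prod_i n^+_i!\big/\prod_{i,j}n\,_{i,j}!$, as claimed. The only genuine difficulty is the bookkeeping: keeping arc multiplicities straight (arcs distinguished for BEST and for the matrix--tree count, then collapsed by the factor $\prod_{i,j}n\,_{i,j}!$), matching the root and the orientation of the arborescences to the base vertex $x_n$ of the circuit, and carrying out the reduction from open trails to closed walks; the BEST and matrix--tree theorems themselves are the classical inputs.
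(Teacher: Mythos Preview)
The paper does not prove this statement: it is quoted from \cite{Whittle55} and used as a black box. Your argument is correct and is essentially the standard modern derivation of Whittle's formula, combining the BEST theorem for Eulerian circuits with Tutte's directed matrix--tree theorem; the reduction of the open case $x_1\neq x_n$ to the closed case by adjoining an arc $x_n\to x_1$, the bookkeeping with labelled versus unlabelled parallel arcs, and the identification $\det(L_{x_n,x_n})=A_{x_n}(G)$ via the out--Laplacian are all handled properly. The only cosmetic point is that your parenthetical about ``$n^+_w$ based rotations'' in the closed case is superfluous---once you sum over the $n^+_w$ choices of first arc you already have the count of based walks at $w$, with no rotation correction needed---but this does not affect the result.
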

Then it is $w\,_{x_1,N}= \det(B_{x_n, x_n}) \;
\displaystyle\frac{\prod_i n^+_i!}{\prod_{i,j} n\,_{i,j}!} \;
p\,_{x_1,N}$.

We say that an $I$--valued process $X=\{X_n\}_{n \in \mathbb{N}}$
is ME if $(X_1, \ldots , X_n)$ is ME for every $n$. In
\cite{DiaconisFreedman80MC} it is demonstrated that a recurrent
process ($X_1=X_n$ i.o.) is ME if, and only if, its law is a
mixture of Markov Chains. That is, let $\mathcal{P}$ be the space
of all  the stochastic  matrices $\Theta=\{\theta\,_{i,j}\}_{i,j}$
on $I \times I$. Then there exists, and is unique, a mixing
measure $\nu$ on the Borel sets of $I\times \mathcal{P}$ such that
\[
P(X_1=x_1,\ldots, X_n=x_n) = \int_{\mathcal{P}} \prod_{i=1}^{n-1}
\theta_{x_i,x_{i+1}} \: \nu (x_1,d \mathbf{\Theta})
\]
Let $\Gamma_i(k)$ be the step of the process $X$ at which the
state $i$  occurs for the $k$--th time. Let $V_i(k)$ be the
$k$--th successor of the state $i$, i.e. the variable immediately
subsequent the $k$--th occurrence of $i$
($V_i(k)=X_{\Gamma_i(k)+1}$). The hypothesis of de~Finetti  was
that, if all the subsequences $\{V_i(k)\}_{k=1, \ldots , n_i^+}$,
for $i \in I$, are exchangeable and $\infty$--extendible, then $X$
is a mixture of Markov Chains. This actually occurs if all the
states in $I$ are recurrent, but Lemma 5 in \cite{Fortinietal02}
assures that a recurrent ME process is strongly recurrent, then
all the states are recurrent and the two characterizations
coincide.

Zaman in \cite{Zaman84, Zaman86} demonstrated that finite Markov
exchangeability does not coincide with finite exchangeability of
the $\{V_i(k)\}_{k=1, \ldots , n_i^+}$, $i \in I$. In fact, given
$x_1$ and $N$, some of the transitions in $(X_1, \ldots , X_n)$
should necessarily occur as last. Then, the subsequences
$\{V_i(k)\}_k$ are invariant only under permutations that do not
alter those forced transitions. Zaman described the extremal
$n$--ME distributions as particular urn processes without
replacement where some balls should necessarily be drawn as last,
but the characterization of the mixture of Markov Chains cannot be
derived through a passage to the limit without adding some
restrictions.

\subsection{\textmd{Markov exchangeable binary sequences}}

The proofs of the theorems \eqref{thm: no of matrices},
\eqref{thm: from mark w to p} and \eqref{thm: from mark p to p} of
this section are in appendix.

If  $I=\{0,1\}$ we deal with  $2\times 2$ transition count
matrices of the kind:
\[
N=\left(%
\begin{array}{cc}
  n\,_{0,0} & n\,_{0,1} \\
  n\,_{1,0} & n\,_{1,1} \\
\end{array}%
\right)
\]
and it is $n\,_{0,0}+n\,_{0,1}=n_0^+$ and
$n\,_{1,0}+n\,_{1,1}=n_1^+$. The term $\det(B_{x_n, x_n})$ in
Theorem \ref{thm: Whittle} simply is $n\,_{1,0}/n_1^+$ if $x_n=0$,
and $n\,_{0,1}/n_0^+$ if $x_n=1$. So we have
\begin{equation}\label{eq : p w mark exb 01}
w\,_{x_1, N} = \left\{
\begin{array}{rl}
\binom{n_0^+}{n\,_{0,0}}      \binom{n_1^+-1}{n\,_{1,1}}  \; p\,_{x_1, N} & \quad \mbox{ if } (x_1,N) \mbox{ imply } x_n=0 \\
\binom{n_0^+-1}{n\,_{0,0}}    \binom{n_1^+}{n\,_{1,1}}    \; p\,_{x_1, N} & \quad \mbox{ if } (x_1,N) \mbox{ imply } x_n=1 \\
\end{array}
\right.
\end{equation}
We can consider separately the sequences depending on the initial
state. From now on, we  fix $P(X_1=0)=1$ and hence we will
consider only the sequences starting with 0 and  the probabilities
$\{w\,_{0,N}\}_{N \in \Phi(0,n)}$, and $\{p\,_{0,N}\}_{N \in
\Phi(0,n)}$.  We will also use the self--explaining notation
$p\,_0 \smx{n\,_{0,0}&n\,_{0,1}\\n\,_{1,0} & n\,_{1,1} }$ and
$w\,_0 \smx{n\,_{0,0}&n\,_{0,1}\\n\,_{1,0} & n\,_{1,1} }$ when we
need to display the number of transitions.

Unlike the DFPE case,  the number of  probabilities defining an
$n$--ME distribution is not so evident. We have to count the
possible different transition count matrices for each fixed
starting state. From \eqref{eq: 1 kind} and \eqref{eq: 2 kind} two
cases are possible when $X_1=0$, and we define
\begin{gather*}
\Phi_1(0,n) = \{N \in \Phi(0,n) \: : \: n\,_{0,1} = n\,_{1,0} \}\\
\Phi_2(0,n) = \{N \in \Phi(0,n) \: : \: n\,_{0,1} = n\,_{1,0}+1 \}
\end{gather*}
such that $\Phi_1(0,n) \cup \Phi_2(0,n) =\Phi(0,n)$. Call the
transition count matrices of $\Phi_1(0,n)$ matrices of the first
kind, and those of $\Phi_2(0,n)$ of the second kind.  The
following theorem corrects the assertion $|\Phi(0,n)| = 1 +
\binom{n-1}{2}$ stated in a different form in \cite[page
239]{DiaconisFreedmanKoch} and reported in \cite{Jeffrey04}
\begin{thm}\label{thm: no of matrices}
\[
|\Phi(0,n)| = 1 + \binom{n}{2}
\]
\end{thm}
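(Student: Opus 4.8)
The plan is to count, for fixed initial state $0$ and sequence length $n$, the number of matrices $N=\smx{n_{0,0}&n_{0,1}\\ n_{1,0}&n_{1,1}}$ that arise as transition counts of some $\{0,1\}$-valued $n$-tuple starting at $0$. By the Proposition above, the admissible $N$ are exactly the irreducible nonnegative integer $2\times2$ matrices with $\sum_{i,j}n_{i,j}=n-1$ satisfying either $n_{0,1}=n_{1,0}$ (first kind, $x_n=0$) or $n_{0,1}=n_{1,0}+1$ (second kind, $x_n=1$). So I would first split $\Phi(0,n)=\Phi_1(0,n)\sqcup\Phi_2(0,n)$ and count each piece, being careful about the degenerate case where the sequence never leaves state $0$ (the all-zero sequence, $N=\smx{n-1&0\\0&0}$), which is the unique non-irreducible ``matrix'' that must nonetheless be counted since it is realized.

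First I would handle the generic case $n_{0,1}\geq 1$ (equivalently the sequence visits state $1$, so irreducibility is automatic and is equivalent to $n_{0,1}\geq1$ and $n_{1,0}\geq1$ once we also know $x_1=0$; more precisely starting at $0$ we need $n_{0,1}\geq1$, and then $n_{1,0}\geq1$ in the first kind while $n_{1,0}\geq0$ in the second kind). Write $a=n_{0,0}\geq0$, $b=n_{1,1}\geq0$, and let $c=n_{0,1}$. For a matrix of the \emph{second} kind we have $n_{1,0}=c-1$, so the constraint $a+b+c+(c-1)=n-1$ gives $a+b+2c=n$ with $c\geq1$, $a,b\geq0$; for each such $c$ the number of $(a,b)$ with $a+b=n-2c\geq0$ is $n-2c+1$, so summing over $c=1,\dots,\lfloor n/2\rfloor$ gives the count of second-kind matrices. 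For a matrix of the \emph{first} kind with $c\geq1$ we have $n_{1,0}=c$, so $a+b+2c=n-1$ with $c\geq1$, contributing $n-1-2c+1=n-2c$ choices of $(a,b)$ for $c=1,\dots,\lfloor(n-1)/2\rfloor$; and separately the first-kind matrix with $c=0$ is forced to be the all-zeros-transition sequence $N=\smx{n-1&0\\0&0}$, contributing exactly $1$.

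Then I would add the three sums $1+\sum_{c\geq1}(n-2c)+\sum_{c\geq1}(n-2c+1)$ and show it telescopes to $1+\binom{n}{2}$. The cleanest way is to note that $\sum_{c\ge1}(n-2c)$ (over $c$ with $n-2c\ge 0$, i.e.\ the first-kind contributions for $c\ge1$) together with $\sum_{c\ge1}(n-2c+1)$ (the second-kind contributions) is precisely the sum $\sum_{m=1}^{n-1} m$: as $c$ runs, the two arithmetic progressions $n-2,n-4,\dots$ and $n-1,n-3,\dots$ interleave to give $n-1,n-2,n-3,\dots,1$, hence sum to $\binom{n}{2}$; adding the lone all-zero matrix gives $1+\binom{n}{2}$. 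I would present this interleaving argument carefully with the parity of $n$ handled once (it is the only place where one must check that no term is double-counted or omitted at the bottom of the ranges), and that parity bookkeeping is the one mildly delicate point.

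The main obstacle, such as it is, is purely the careful accounting at the boundary: making sure the all-zero sequence is included exactly once (it is the single first-kind matrix with $n_{0,1}=0$, the only admissible non-irreducible case), and making sure that for both kinds the lower and upper limits on $c$ are exactly right so that the interleaved progressions reconstruct $1,2,\dots,n-1$ without gap or overlap. Once the ranges are pinned down, the identity $1+\binom{n}{2}$ follows from $\sum_{m=1}^{n-1}m=\binom{n}{2}$. (This also exhibits why the previously stated $1+\binom{n-1}{2}$ is off: it omits the contribution of the matrices where exactly one transition of one type is ``the last one'', i.e.\ it undercounts the second-kind matrices by a linear term.)
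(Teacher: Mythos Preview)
Your proposal is correct and follows essentially the same route as the paper: split $\Phi(0,n)=\Phi_1(0,n)\sqcup\Phi_2(0,n)$, parametrize by $c=n_{0,1}$, obtain $|\Phi_1(0,n)|=1+\sum_{c=1}^{\lfloor(n-1)/2\rfloor}(n-2c)$ and $|\Phi_2(0,n)|=\sum_{c=1}^{\lfloor n/2\rfloor}(n-2c+1)$, and then interleave the two arithmetic progressions to recover $\sum_{m=1}^{n-1}m=\binom{n}{2}$. The only cosmetic difference is that the paper derives the formula for $|\Phi_2(0,n)|$ via a bijection with $\Phi_1(0,n+1)\setminus\{\smx{n&0\\0&0}\}$ (append a final $(1,0)$ transition), whereas you count $\Phi_2$ directly; both land on the identical sum, and the concluding interleaving step is the same in both arguments.
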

For symmetry reasons the same result is valid for the sequences
starting in 1.

Now we state a couple of equations we will use in the following.
For any $n$ and $N$ we have
\begin{align}
p\,_{0,N} &= p\,_0 \smx{
 n\,_{0,0} & n\,_{0,1}\\
 n\,_{1,0} & n\,_{1,1}
}   = p\,_0 \smx{
 n\,_{0,0}+1 & n\,_{0,1}\\
 n\,_{1,0}   & n\,_{1,1}
} +
 p\,_0 \smx{
 n\,_{0,0} & n\,_{0,1}+1\\
 n\,_{1,0} & n\,_{1,1}
} & \text{ if } N \in \Phi_1(0,n)   \label{eq: mark p 0}\\
p\,_{0,N} &= p\,_0 \smx{
 n\,_{0,0} & n\,_{0,1}\\
 n\,_{1,0} & n\,_{1,1}
}   = p\,_0 \smx{
 n\,_{0,0}   & n\,_{0,1}\\
 n\,_{1,0}+1 & n\,_{1,1}
} +
 p\,_0 \smx{
 n\,_{0,0} & n\,_{0,1}  \\
 n\,_{1,0} & n\,_{1,1}+1
} & \text{ if } N \in \Phi_2(0,n)  \label{eq: mark p 1}
\end{align}

The first $k$ steps $(X_1,\ldots , X_k)$, $k<n$, of an $n$--ME
sequence are $k$--ME, and we can obtain all the probabilities
$\{p\,_{0,K}\}_{K \in \Phi(0,k)}$ from the $\{p\,_{0,N}\}_{N \in
\Phi(0,n)}$. Let $K=\smx{
  k\,_{0,0} & k\,_{0,1}\\
  k\,_{1,0} & k\,_{1,1}
}$ be the transition count matrix up to step $k$ of a sequence
starting in 0, and let $k\,_{0,0}+k\,_{0,1}=k_0^+$ and
$k\,_{1,0}+k\,_{1,1}=k_1^+$. Then
\begin{thm}\label{thm: from mark p to p}
\begin{multline*}
p\,_{0,K} = \sum_{N \in \Phi_1(0,n)}
\frac{(n\,_{0,0})_{k\,_{0,0}}(n\,_{0,1})_{k\,_{0,1}}
}{(n_0^+)_{k_0^+ }}
\frac{  (n\,_{1,1})_{k\,_{1,1}}  (n\,_{1,0}-1)_{k\,_{1,0}} }{(n_1^+-1)_{ k_1^+ }}\;\;w\,_{0,N}+\\
\qquad\quad + \sum_{N \in \Phi_2(0,n)} \frac{
(n\,_{0,0})_{k\,_{0,0}} (n\,_{0,1}-1)_{k\,_{0,1}} }{ (n_0^+ -1)_{
k_0^+ } } \frac{ (n\,_{1,1})_{k\,_{1,1}} (n\,_{1,0})_{k\,_{1,0}}
}{(n_1^+)_{ k_1^+ }}\;\;w\,_{0,N}
\end{multline*}
\end{thm}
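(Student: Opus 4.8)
The plan is to compute $p\,_{0,K}$ directly as a conditional average of the probability of extending a $k$--step path with transition matrix $K$ to an $n$--step path, grouping the extensions by the resulting full transition matrix $N$. Fix a sequence $(x_1,\ldots,x_k)$ starting in $0$ with transition count matrix $K$; by Markov exchangeability its probability is $p\,_{0,K}$, independent of which representative we chose. Now $p\,_{0,K}=\sum_{\textbf{x}\in I^n}P(\textbf{x})\,\mathbbm 1\{(x_1,\ldots,x_k)\text{ is the chosen prefix}\}$, and we reorganize this sum according to the transition count matrix $N$ of the full sequence $\textbf{x}$. For each admissible $N\in\Phi(0,n)$ with $N\ge K$ entrywise (and the right boundary conditions so that a path with prefix $K$ can have total counts $N$), every full sequence in $[0,N]$ contributes $p\,_{0,N}$, so I need the number of sequences in $[0,N]$ whose first $k$ transitions realize exactly the matrix $K$, divided among the representatives — equivalently, I want $p\,_{0,K}=\sum_N (\#\text{such extensions of our fixed prefix})\cdot p\,_{0,N}$. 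So the first step is a pure counting lemma: \emph{how many ways can a fixed prefix with transition counts $K$ be completed to a sequence with total transition counts $N$?}

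For that count I would use the same Eulerian--path / BEST--theorem circle of ideas behind Whittle's Theorem~\ref{thm: Whittle}. Completing the prefix amounts to choosing, at each state $i$, an ordering of the remaining $n^+_i-k^+_i$ out-transitions from $i$ — of which $n\,_{i,j}-k\,_{i,j}$ go to $j$ — subject to the global constraint that the whole thing be a single walk ending at the forced terminal state $x_n$. The standard trick is that the number of such completions factors as a multinomial count of the unordered multiset of remaining out-edges at each vertex, $\prod_i \binom{n^+_i-k^+_i}{\{n\,_{i,j}-k\,_{i,j}\}_j}$, times a spanning--arborescence determinant exactly as in Whittle, which in the binary case collapses (as already noted in the text, $\det B_{x_n,x_n}$ is just $n\,_{1,0}/n_1^+$ or $n\,_{0,1}/n_0^+$). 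Dividing the resulting completion count for one fixed prefix by $|[0,K]|$ is not needed — we fix a single prefix — but I do need to be careful that the count depends on $K$ only through its entries, which it does. Assembling the pieces and converting the ratios of factorials $\frac{(n^+_i-k^+_i)!}{\prod_j (n\,_{i,j}-k\,_{i,j})!}\Big/\frac{n^+_i!}{\prod_j n\,_{i,j}!}$ into falling factorials gives exactly the products $\frac{(n\,_{0,0})_{k\,_{0,0}}(n\,_{0,1})_{k\,_{0,1}}}{(n_0^+)_{k_0^+}}$ etc., with the $-1$ shifts in one row or the other according to whether $N$ is of the first or second kind (because the determinant factor $n\,_{1,0}/n_1^+$ versus $n\,_{0,1}/n_0^+$ contributes that one unit of shift, and only matrices of the corresponding kind admit a completion ending in the right state).

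The cleanest route, and the one I would actually write, avoids re-deriving Whittle and instead inverts the relation already available: combine $w\,_{0,N}=\bigl(\text{Whittle count}\bigr)p\,_{0,N}$ from \eqref{eq : p w mark exb 01} with a \emph{downward} recursion on $n$ obtained by iterating \eqref{eq: mark p 0} and \eqref{eq: mark p 1}. Those two identities express a $p\,_0$ at level $n$ as a sum of two $p\,_0$'s at level $n+1$ with coefficient $1$; iterating from level $k$ up to level $n$ expresses $p\,_{0,K}$ as a sum over all $N$ reachable from $K$ with coefficient equal to the number of monotone lattice paths in the matrix entries from $K$ to $N$ staying inside the admissible region — and that path count is precisely the product of binomial coefficients $\prod\binom{n\,_{i,j}-k\,_{i,j}}{\,\cdot\,}$ type quantity, which matches what the falling-factorial ratios encode once we substitute $p\,_{0,N}$ in terms of $w\,_{0,N}$ via \eqref{eq : p w mark exb 01}. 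The bookkeeping of which row gets the $-1$ (first kind vs.\ second kind) falls out of \eqref{eq: mark p 0}--\eqref{eq: mark p 1} keeping a matrix inside $\Phi_1$ or moving it into $\Phi_2$. The main obstacle I anticipate is exactly this boundary/parity bookkeeping: making sure the ranges of summation (which $N$ genuinely arise, i.e.\ $N\ge K$ entrywise together with the Eulerian consistency condition \eqref{eq: 1 kind}/\eqref{eq: 2 kind}) are handled so that the falling factorials with a shifted argument, e.g.\ $(n\,_{1,0}-1)_{k\,_{1,0}}$, are nonnegative and the terms that should vanish do vanish; everything else is routine manipulation of falling factorials and binomial identities.
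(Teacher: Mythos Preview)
Your proposal is correct, and your first route---counting completions of a fixed prefix and summing over $N$---is exactly what the paper does. The paper's execution is simpler than you describe, though: once the prefix with count $K$ (ending at the state $x_k$ determined by $K$) is fixed, a completion to a sequence with total count $N$ is \emph{literally} an $(n-k+1)$--step sequence starting at $x_k$ with transition count $N-K$, so the number of completions is directly the Whittle count for $N-K$ (Theorem~\ref{thm: Whittle}), with no need to revisit BEST or arborescences. That gives $p\,_{0,K}=\sum_N(\text{Whittle count for }N-K)\,p\,_{0,N}$, and substituting $p\,_{0,N}$ in terms of $w\,_{0,N}$ via \eqref{eq : p w mark exb 01} together with the identity $\binom{n_0^+-k_0^+}{n\,_{0,0}-k\,_{0,0}}/\binom{n_0^+}{n\,_{0,0}}=(n\,_{0,0})_{k\,_{0,0}}(n\,_{0,1})_{k\,_{0,1}}/(n_0^+)_{k_0^+}$ yields the stated falling-factorial form; the $-1$ shifts appear exactly because the Whittle determinant factor is $n\,_{1,0}/n_1^+$ or $n\,_{0,1}/n_0^+$ according to whether $N\in\Phi_1$ or $N\in\Phi_2$.

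Your second route---iterating \eqref{eq: mark p 0}--\eqref{eq: mark p 1} from level $k$ up to level $n$---is a genuine alternative. It trades the one-shot application of Whittle for a step-by-step recursion whose multiplicities count monotone paths $K\to N$ in the lattice of transition matrices; these multiplicities are of course the same completion counts. This is a perfectly valid argument and arguably more self-contained (it uses only the elementary one-step identities rather than invoking Whittle again), at the cost of the parity/boundary bookkeeping you flagged. The paper opts for the shorter direct count.
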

where the sums should be restricted over those matrices $N$ in
$\Phi(0,n)$ having $n\,_{i,j} \geq k\,_{i,j}$, for all $i$, $j$ in
$\{0,1\}$. Consider  the probability $p\,_0 \smx{
  a & 1\\
  0 & b
}$ of having the sequence of $a+b+2$ steps starting in 0 with $a$
transitions $(0,0)$, a single transition $(0,1)$ and ending with
$b$ transitions $(1,1)$, and denote it $w\,_{0,a,b}$. By the above
theorem we have
\begin{equation}\label{eq: markq markl}
\begin{split}
w\,_{0,a,b} =p\,_0 \smx{a & 1\\0 & b }=
&\sum_{N \in\Phi_1(0,n)}\frac{ (n\,_{0,0})_{a} \; n\,_{0,1}}{ (n_0^+)_{a+1} }\frac{(n\,_{1,1})_{b}}{(n_1^+-1)_{b}}\;\;w\,_{0,N}+\\
&+ \sum_{N \in \Phi_2(0,n)} \frac{ (n\,_{0,0})_{a} \;
(n\,_{0,1}-1) } { (n_0^+-1)_{a+1} }
\frac{(n\,_{1,1})_{b}}{(n_1^+)_{b}}\;\; w\,_{0,N}
\end{split}
\end{equation}
We set $w\,_{0,n-1,0} = p\,_0 \smx{n-1 & 0\\0 & 0}$.

Define the operators $\Delta_0$ and $\Delta_1$  such that:
\[
\Delta_0\left(w\,_{0,a,b} \right) =  w\,_{0,a+1,b}  - w\,_{0,a,b}
\qquad \mbox{ and } \qquad \Delta_1\left(w\,_{0,a,b} \right) =
w\,_{0,a,b+1}  - w\,_{0,a,b}
\]
Then we have
\begin{thm}\label{thm: from mark w to p}
\[
p\,_{0,N}  = (-1)^{n\,_{0,1}-1 + n\,_{1,0}}\Delta_0^{n\,_{0,1}-1}
\: \Delta_1^{n\,_{1,0}} \left(w\,_{0,n\,_{0,0},n\,_{1,1}}\right)
\]
\end{thm}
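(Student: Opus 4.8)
The plan is to invert the linear relation \eqref{eq: markq markl} (equivalently, the special case of Theorem \ref{thm: from mark p to p} obtained by setting $K = \smx{a&1\\0&b}$) exactly as \eqref{eq: partiall partialq} inverts \eqref{eq: partialq partiall} in the DFPE case. Concretely, I would first use the difference-type recursions \eqref{eq: mark p 0} and \eqref{eq: mark p 1} restricted to the "comb''-shaped matrices $\smx{a&1\\0&b}$ to express $p\,_{0,N}$ for a general $N$ in terms of the $w\,_{0,a,b}$. The key observation is that a transition matrix $N = \smx{n\,_{0,0}&n\,_{0,1}\\n\,_{1,0}&n\,_{1,1}}$ of $\Phi(0,n)$ differs from the comb matrix $\smx{n\,_{0,0}&1\\0&n\,_{1,1}}$ precisely by $n\,_{0,1}-1$ extra $(0,1)$-transitions and $n\,_{1,0}$ extra $(1,0)$-transitions, and these are exactly the two directions along which $\Delta_0$ and $\Delta_1$ act.

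The main step is an induction on $n\,_{0,1} + n\,_{1,0}$ (the "distance'' of $N$ from a comb matrix). The base case $n\,_{0,1} = 1$, $n\,_{1,0} = 0$ is the definition $p\,_0\smx{a&1\\0&b} = w\,_{0,a,b}$ together with $p\,_0\smx{n-1&0\\0&0} = w\,_{0,n-1,0}$. For the inductive step I would invert \eqref{eq: mark p 0}: reading that identity as $p\,_0\smx{n\,_{0,0}+1&n\,_{0,1}\\0&n\,_{1,1}} = p\,_0\smx{n\,_{0,0}&n\,_{0,1}\\0&n\,_{1,1}} - p\,_0\smx{n\,_{0,0}&n\,_{0,1}-1\\0&n\,_{1,1}}$ shows that raising $n\,_{0,0}$ by one while lowering $n\,_{0,1}$ by one corresponds to applying $-\Delta_0$; symmetrically, \eqref{eq: mark p 1} shows that the passage exchanging an extra $(1,0)$-transition for a $(1,1)$-transition corresponds to applying $-\Delta_1$. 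Iterating $n\,_{0,1}-1$ applications of $-\Delta_0$ and $n\,_{1,0}$ applications of $-\Delta_1$ starting from $w\,_{0,n\,_{0,0},n\,_{1,1}}$ produces exactly the claimed formula, the sign $(-1)^{n\,_{0,1}-1+n\,_{1,0}}$ being the product of the signs, and $\Delta_0,\Delta_1$ commuting since they act on independent indices.

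The step I expect to be the main obstacle is bookkeeping the correspondence between the transition-count matrices and the pair $(a,b)$ through the recursions \eqref{eq: mark p 0}–\eqref{eq: mark p 1}: one must check that each intermediate matrix appearing in the iterated differences is still a \emph{consistent} matrix in $\Phi(0,m)$ for the appropriate $m$, that the reduction really does terminate at a comb matrix, and that no matrix is reached by two different reduction paths with conflicting signs (this is where commutativity of $\Delta_0$ and $\Delta_1$ and the fact that $\Phi_1$- versus $\Phi_2$-membership is governed solely by $n\,_{0,1}-n\,_{1,0}$ are used). Once this combinatorial skeleton is in place the algebraic verification is routine, mirroring de~Finetti's argument for \eqref{eq: partiall partialq}; alternatively, one can avoid the induction altogether by plugging the claimed right-hand side into \eqref{eq: markq markl} and checking that the binomial/falling-factorial coefficients telescope, but the inductive route via \eqref{eq: mark p 0}–\eqref{eq: mark p 1} is cleaner.
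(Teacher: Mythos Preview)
Your approach is the paper's: both proceed by induction using \eqref{eq: mark p 0}--\eqref{eq: mark p 1} starting from the comb matrices $\smx{a&1\\0&b}$, each step producing one factor of $-\Delta_1$ (via \eqref{eq: mark p 1}) or $-\Delta_0$ (via \eqref{eq: mark p 0}). The paper makes the forced alternation explicit, writing the result as $(\Delta_0\circ\Delta_1)^{n_{1,0}}$ or $-\Delta_1(\Delta_0\circ\Delta_1)^{n_{1,0}-1}$ according to whether $N\in\Phi_2(0,n)$ or $N\in\Phi_1(0,n)$.

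One bookkeeping slip to fix: the rearrangement of \eqref{eq: mark p 0} you display is not correct. Solving \eqref{eq: mark p 0} for the term with the larger off--diagonal entry gives
\[
p_0\smx{n_{0,0}&n_{0,1}+1\\n_{1,0}&n_{1,1}}
   \;=\; p_0\smx{n_{0,0}&n_{0,1}\\n_{1,0}&n_{1,1}}
       - p_0\smx{n_{0,0}+1&n_{0,1}\\n_{1,0}&n_{1,1}},
\]
which expresses the probability with $n_{0,1}$ increased by one as the negative $\Delta_0$--difference (in the first index) of the probability with $n_{0,1}$ unchanged. So it is \emph{increasing} $n_{0,1}$, not decreasing it or trading it for $n_{0,0}$, that corresponds to one application of $-\Delta_0$; and you should not set $n_{1,0}=0$ in that display, since \eqref{eq: mark p 0} is only available when $N\in\Phi_1(0,\cdot)$, i.e.\ when $n_{0,1}=n_{1,0}$. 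With this corrected, your induction is exactly the paper's argument.
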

In an $n$--ME sequence the probabilities $\{w\,_{0,a,b}\}$ are
well defined for every couple of nonnegative integers $(a,b)$
having sum not greater than $n-2$, together with the case
$w\,_{0,n-1,0}$. Denote as $\mathcal{L}_n$ the set of couples
$(a,b)$ such defined together with the couple $(n-1,0)$. Theorem
\ref{thm: from mark w to p} assures that the probabilities
$\{w\,_{0,a,b}\}_{\mathcal{L}_n}$ suffice to completely define an
$n$--ME sequence starting in 0. It is easily seen that
$|\mathcal{L}_n|$ = $\binom{n}{2}+1$ as we would expect.

\subsection{\textmd{Extendibility}}

Unlike the DFPE case, in a ME sequence it is meaningless to
consider separately the extendibility of the two  subsequences
$\{V_0(k)\}_k$ and $\{V_1(k)\}_k$. Then we say that an $n$--ME
sequence $(X_1,\ldots ,X_n)$ is $r$--extendible if there exist
$(X_{n+1},\ldots ,X_r)$ such that $(X_1,\ldots, X_r)$ is $r$--ME.

The probabilities $w\,_{0,a,b}$ allow us to study the
extendibility of a ME sequence in a geometric approach analogous
to that of Section \ref{sec: ext part}.

The space of the probabilities $\{w\,_{0,a,b}\}_{\mathcal{L}_n}$
of all the $n$--ME sequences starting in 0 (call it $\Gamma_n$) is
implicitly defined by Theorem \ref{thm: from mark w to p}. That
is, we have that every $w\,_{0,a,b}$ should satisfy
\begin{equation}\label{eq: conditions Markovl}
(-1)^{c+d-1}\Delta_0^{c-1} \Delta_1^d(w\,_{0,a,b})\geq 0 \qquad
\forall (c,d)\: : \; \smx{a & c\\d & b } \in \Phi(0,n)
\end{equation}
and we can write
\[
\Gamma_n=\Big\{(w\,_{0,a,b})_{\mathcal{L}_n} \: : \;
w\,_{0,a,b}\geq 0, \; \mbox{\eqref{eq: conditions Markovl} is
satisfied} \Big\}
\]
Theorem \ref{thm: from mark w to p} and  \eqref{eq: markq markl}
establish affine congruence between the unitary
$\binom{n}{2}$--dimensional simplex
$\diamondsuit_{\binom{n}{2}+1}$, which is the space of the
probabilities $\{w\,_{0,N}\}_{N \in \Phi(0,n)}$, and $\Gamma_n$,
which consequently is a $\binom{n}{2}$--dimensional (non standard)
simplex. The vertices of $\diamondsuit_{\binom{n}{2}+1}$ represent
the extremal distributions of Theorem \ref{thm:finite partial
exb}. Equation \eqref{eq: markq markl} maps them   to the vertices
of  $\Gamma_n$. We will denote as $\gamma_N$ the vertex of
$\Gamma_n$ corresponding to the extremal distribution having
$w\,_{0,N}=1$.

An $n$--ME sequence starting in 0 represented in $\Gamma_n$ by the
point $(w\,_{0,a,b})_{\mathcal{L}_n}$ is $r$--extendible if, and
only if, there exist probabilities $\{w\,_{0,a,b}\}$ with $(n-2) <
(a+b) \leq (r-2)$ together with $w\,_{0,r-1,0}$, such that
$(w\,_{0,a,b})_{\mathcal{L}_r}$ lies in $\Gamma_r$.  Let
$\Gamma_{r}^{(n)}$ be the orthogonal projection of $\Gamma_r$ over
the coordinates of $\Gamma_n$, and let $\gamma_R^{(n)}$ be the
analogous projection of $\gamma_R$. Then $\Gamma_{r}^{(n)}$
represents the $n$--ME sequences that are (at least)
$r$--extendible and is the convex hull of the
$\{\gamma_R^{(n)}\}_{R \in \Phi(0,r)}$.

By \eqref{eq : p w mark exb 01}, \eqref{eq: mark p 0}, \eqref{eq:
mark p 1},  \eqref{eq: markq markl}, and with passages similar to
those of the proof of Theorem \ref{thm: representation partiall}
one can prove the following is valid for any $r>n$:
\begin{equation}\label{eq: relation points markovl}
\begin{split}
  \gamma^{(n)}_{R} = \gamma^{(n)}\smx{
  r\,_{0,0} & r\,_{0,1}\\
  r\,_{1,0} & r\,_{1,1}
} = \frac{r\,_{0,1}}{r_0^+} \; \gamma^{(n)}\smx{
  r\,_{0,0}   & r\,_{0,1}\\
  r\,_{1,0}-1 & r\,_{1,1}
} + \frac{r\,_{0,0}}{r_0^+} \; \gamma^{(n)}\smx{
  r\,_{0,0}-1 & r\,_{0,1}\\
  r\,_{1,0}   & r\,_{1,1}
} \quad \mbox{ if } R \in \Phi_1(0,r)\\
\gamma^{(n)}_{R} = \gamma^{(n)}\smx{
  r\,_{0,0} & r\,_{0,1}\\
  r\,_{1,0} & r\,_{1,1}
} = \frac{r\,_{1,0}}{r_1^+} \; \gamma^{(n)}\smx{
  r\,_{0,0} & r\,_{0,1}-1\\
  r\,_{1,0} & r\,_{1,1}
} + \frac{r\,_{1,1}}{r_1^+} \; \gamma^{(n)}\smx{
  r\,_{0,0} & r\,_{0,1}\\
  r\,_{1,0} & r\,_{1,1}-1
} \quad \mbox{ if } R \in \Phi_2(0,r)
\end{split}
\end{equation}
As a consequence, $\Gamma_{r+1}^{(n)}$ is embedded in
$\Gamma_{r}^{(n)}$ and $\{ \Gamma_{r}^{(n)} \}_r$ is a nested
sequence of convex polytopes.  To verify whether a point
representing a distribution lies inside a certain polytope, and
establish its extendibility, we can use a linear program analogous
to  \eqref{eq: LP}.

We have computationally calculated the volume of some of the
polytopes $\Gamma_{r}^{(n)}$. We consider the ratio of the volume
of $\Gamma_{r}^{(n)}$ to the volume of $\Gamma_n$ as an index of
the proportion of $n$--ME distribution that are $r$--extendible,
as has been done in  \cite{Crisma82} and \cite{Wood92} for the
exchangeable case, and we report some values in Table \ref{table}.
By \eqref{eq: relation points markovl} one can see that, unlike
the DFPE case, not all the points $\gamma_{R}^{(n)}$ are vertices
of $\Gamma_{r}^{(n)}$ as some of them are redundant. A strange
consequence is that $\Gamma_{r}^{(3)}=\Gamma_3$ for any $r$, so in
Table \ref{table} we start with $n=4$.
\begin{table}[h!]
\[
\begin{array}{c|c c c c c c}
 n \; \backslash \; r & 5 & 6 & 7 & 8 & 9 & 10 \\
 \hline
  4 & 0.75 & 0.6667 & 0.6024 & 0.5504 & 0.5105 & 0.4778 \\
  5 &      & 0.4445 & 0.2860 & 0.2018 & 0.1454 & 0.1091 \\
  6 &      &        & 0.1929 & 0.0738 & 0.0336 &        \\
  7 &      &        &        & 0.0625 & 0.0111 &        \\
  8 &      &        &        &        & 0.0146 &        \\
  9 &      &        &        &        &        & 0.0025 \\
\end{array}
\]
\caption{Values of $Vol(\Gamma_r^{(n)})$ / $Vol(\Gamma_n)$ for
different values of  $n$ and $r$. The entries relative to
$n=6,7,8$ with $r=10$ are missing since it seems computationally
intractable to find the relative volume of
$\Gamma_r^{(n)}$.}\label{table}
\end{table}

\subsubsection{$\infty$--extendible case}

An $\infty$--extendible $n$--ME sequence is not necessarily the
initial segment of a mixture of Markov Chains. As pointed out in
\cite{DiaconisFreedman80MC}, an infinite ME  sequence starting in
0 is a mixture of two kinds of processes: recurrent Markov Chains
and processes that deterministically begin with a streak of zeros,
make a single $(0,1)$ transition and end with all ones. But if, as
$n \to \infty$, both $n^+_0$ and $n^+_1$ go to infinity,  there
exists a unique mixing measure $\nu$ over $[0,1]^2$ and a couple
$(\theta\,_{0,0},\theta\,_{1,1})$ such that, conditionally on
$X_1=0$
\[
p_0\smx{n\,_{0,0}&n\,_{0,1}\\n\,_{1,0} & n\,_{1,1} } = \int_0^1
\int_0^1 \theta\,_{0,0}^{n\,_{0,0}}(1-\theta\,_{0,0})^{n\,_{0,1}}
\: \theta\,_{1,1}^{n\,_{1,1}}(1-\theta\,_{1,1})^{n\,_{1,0}}  d
\nu\: (\theta\,_{0,0}, \theta\,_{1,1})
\]

Denote the indicator function of the event $\big\{$the $k$--th
successor of i is j$\big\}$ as $Y_{i,j}(k)$: $\mathbbm{1}\,_{j}
\big(V_i(k) \big) = Y_{i,j}(k)$ for all $i,j$ in $\{0,1\}$. Then
we can write
\[
w\,_{0,a,b} = E\left[(1-X_1) \cdot Y_{0,0}(1)\cdots Y_{0,0}(a)
\cdot \Big(1-Y_{0,0}(a+1)\Big) \cdot Y_{1,1}(1)\cdots
Y_{1,1}(b)\right]
\]
When we consider sequences starting both in 0 and 1, we introduce
the probabilities  $w\,_{1,a,b}$, defined as the probabilities of
having the sequence starting in 1 with $b$ transitions $(1,1)$ a
single transition $(1,0)$, and ending with $a$ transitions
$(0,0)$. Then we have:
\[
w\,_{1,a,b} = E\left[X_1 \cdot Y_{1,1}(1)\cdots Y_{1,1}(b) \cdot
\Big(1-Y_{1,1}(b+1)\Big) \cdot Y_{0,0}(1)\cdots Y_{0,0}(a)\right]
\]
In a  mixture of Markov Chains it is
\[
\begin{array}{l}
  w\,_{0,a,b} =  E_{\nu} \left[(1-X_1) \; (\theta\,_{0,0})^{a}
(\theta\,_{1,1})^{b} \right] - E_{\nu} \left[(1-X_1) \;
(\theta\,_{0,0})^{a+1} (\theta\,_{1,1})^{b}
\right] \\
  w\,_{1,a,b} =  E_{\nu} \left[X_1 \; (\theta\,_{0,0})^{a}
(\theta\,_{1,1})^{b} \right] - E_{\nu} \left[X_1 \;
(\theta\,_{0,0})^{a} (\theta\,_{1,1})^{b+1}
\right] \\
\end{array}
\]
So, unlike the DFPE case, we do not have the mixed moments of the
mixing distribution, but those involved differences. It is easily
seen that it is not possible to single out  the mixed moments from
the probabilities $w\,_{0,a,b}$ and $w\,_{1,a,b}$. However, let
$\textbf{N}$ be the transition count matrix of $(X_1,\ldots ,X_n)$
intended as a r.v. Then, if the ME distribution is such that $X_1$
and $\textbf{N}$ are independent, we can obtain them. Define
\[
m\,_{a,b} = E\left[Y_{0,0}(1)\cdots Y_{0,0}(a) \cdot
Y_{1,1}(1)\cdots Y_{1,1}(b)\right]
\]
and let $P(X_1=i)=q_i$. Under  independence of $X_1$ and
$\textbf{N}$ we have
\[
\frac{ w\,_{ 0,a,b } }{ q_0} = m\,_{a,b} - m\,_{a+1,b} \qquad
\mbox{ and } \qquad \frac{ p\,_1 \smx{ 0 & 0\\ 0 & b} }{q_1} =
m\,_{0, b}
\]
Then we have $m\,_{1,b}= m\,_{0,b} -
\left(w\,_{0,0,b}/q_0\right)$, and in general, by recurrence
\[
m\,_{a,b}= m\,_{a-1,b} - \frac{w\,_{0,a-1,b}}{q_0}
\]
So an $n$--ME distribution such that $X_1$ and $\textbf{N}$ are
independent is defined by the quantities $m\,_{a,b}$, for every
couple $(a,b)$ such that $a+b \leq n-1$. In  a mixture of Markov
Chains it is $m\,_{a,b}$ = $E_{\nu}\left[\theta\,_{0,0}^a \:
\theta\,_{1,1}^b\right]$ and we can formulate generalized
covariances as in \eqref{eq: generalized covariance} and
\eqref{eq: partiall partialcov} and state simple necessary
conditions for $\infty$--extendibility as in the DFPE case.

\section{{\large Concluding remarks}}

For what we have said, either for exchangeable, DFPE and ME cases,
the $\infty$--extendible sequences are a particular subset of all
the sequences of a fixed length. Then, in the inferential analysis
of  binary data, one can look for distributions which do not need
the assumption of $\infty$--extendibility as an alternative to the
mixtures of i.i.d and mixtures of Markov Chains processes. So, a
preliminary analysis of the extendibility of the data at hand
(i.e. of their empirical distribution) can give some evidences
against a mixture model, and the present paper give the tools for
this purpose.

Gupta in \cite{Gupta99, Gupta00} looked for an extension of the
Hausdorff's moment problem for distributions over the simplex, and
implicitly  found the necessary and sufficient conditions for the
extendibility of an exchangeable finite sequence  taking values in
a finite state space,  with the same geometric interpretation we
have given. Combining his results with those of Section \ref{sec:
par exb} one can easily find the conditions for the extendibility
of DFPE sequences when the variables assume more than two values.
It seems hard to find an analogous extension for the ME case.

\section*{{\large Appendix A. Proof of Theorem \ref{thm: no of
matrices}}}

We first find $|\Phi_1(0,n)|$. In a sequence of length $n$ we have
$n-1$ transitions. For every fixed value for $n\,_{1,0}=n\,_{0,1}$
equal to $k$, say, the couple $(n\,_{1,1} , n\,_{0,0} )$  can
assume all the possible values such that $(n\,_{1,1}  + n\,_{0,0}
) = n-1-2k$, whose number is $(n-2k)$. The possible values for
$k=n\,_{1,0}=n\,_{0,1}$ range in $0,1,\ldots ,\lfloor (n-1)/2
\rfloor$, where $\lfloor (n-1)/2 \rfloor$ is the integer part of
$(n-1)/2$. In the special case $n\,_{1,0} = n\,_{0,1} = 0$ we have
only one matrix $\smx{n-1 & 0\\ 0 & 0}$. So we have
\[
|\Phi_1(0,n)| = 1 + \sum_{k=1}^{\lfloor (n-1)/2 \rfloor} (n -2k)
\]
Now consider the following two arguments:
\begin{itemize}
    \item All the sequences
consistent with a  matrix in $\Phi_2(0,n)$ start in $0$ and end in
$1$. If we add a transition $(1,0)$ at the end of any such
sequence, its transition count  matrix belong to $\Phi_1(0,n+1)$.
    \item If we reduce of one the number of transitions $(1,0)$ in a  matrix of $\Phi_1(0,n+\nolinebreak1)$,
we obtain a matrix of $\Phi_2(0,n)$.
\end{itemize}
Consequently, each matrix of the second kind is constructible by
one of the first kind of a step longer, as long as $n\,_{1,0}$ is
not null. Then we have to exclude the matrix having $n\,_{1,0} =
n\,_{0,1} = 0$ and it is:
\[
|\Phi_2(0,n)| = |\Phi_1(0,n+1)| -1 = \sum_{k=1}^{\lfloor n/2
\rfloor} ( n +1 -2k)
\]
Clearly it is $|\Phi(0,n)| = |\Phi_1(0,n)| + |\Phi_2(0,n)|$, that
is
\[
|\Phi(0,n)| = 1 + \sum_{k=1}^{\lfloor (n-1)/2 \rfloor} (n -2k) +
\sum_{k=1}^{\lfloor n/2 \rfloor}  (n -2k +1) = 1 +
\sum_{k=1}^{n-1} (n-k) = 1 + \binom{n}{2}
\]

\section*{{\large Appendix B. Proof of Theorem \ref{thm: from mark w to
p}}}

$w\,_{0,a,b}$  is the probability $p\,_0 \smx{a & 1\\ 0 & b}$ of
having a sequence starting in 0 and ending in 1. Then by
\eqref{eq: mark p 1} we have
\begin{equation*}
\begin{split}
w\,_{0,a,b} = p\,_0 \smx{a & 1\\ 0 & b} \;=\; & p\,_0 \smx{a & 1\\1 & b} + p\,_0 \smx{a & 1\\ 0 & b+1}   \\
                                        \;=\; & p\,_0 \smx{a & 1\\1 & b} + w\,_{0,a,b+1}
\end{split}
\end{equation*}
Then it follows that
\begin{equation}\label{delta1}
p\,_0 \smx{a & 1\\1 & b} = -\Delta_1\left(w\,_{0,a,b}\right)
\end{equation}
so, we can derive the probability of having any  sequence starting
in 0 and consistent with the transition count matrix $\smx{ a & 1\\
1 & b }$. These sequences end in 0, so by \eqref{eq: mark p 0} we
have
\begin{equation}\label{from lambda to q markov exb 2}
p\,_0 \smx{a & 2\\1 & b} = p\,_0 \smx{a & 1\\1 & b} - p\,_0
\smx{a+1 & 1\\1 & b}
\end{equation}
We have just demonstrated that all the terms on the right hand
side of \eqref{from lambda to q markov exb 2} can be derived from
\eqref{delta1}, and it is $p\,_0 \smx{a & 2\\1 & b} =
\Delta_0\left(\Delta_1\left(w\,_{0,a,b}\right)\right)$. So, we can
derive  the probability of any sequence starting in 0 and
consistent with the transition count matrix $\smx{ a & 2\\
1 & b }$. For an $n$--ME sequence starting in 0, it is always
$n\,_{0,1}=n\,_{1,0}$ or $n\,_{0,1}=n\,_{1,0}+1$. So, repeating
the previous passages, by recurrence, we obtain:
\[
p\,_{0,N} = p_0\smx{n\,_{0,0}&n\,_{0,1}\\n\,_{1,0} & n\,_{1,1} } =
\left\{\begin{array}{cc}
  -\Delta_1\Big(\Delta_0 \circ\Delta_1\Big)^{n\,_{1,0}-1} \left(w\,_{0,n\,_{0,0},n\,_{1,1}}\right) & \mbox{ if } N \in \Phi_1(0,n) \\
  \Big(\Delta_0 \circ\Delta_1\Big)^{n\,_{1,0}} \left(w\,_{0,n\,_{0,0},n\,_{1,1}}\right) & \mbox{ if } N \in \Phi_2(0,n) \\
\end{array}
\right.
\]
which is equivalent to Theorem \ref{thm: from mark w to p}.\\

\section*{{\large Appendix C. Proof of Theorem \ref{thm: from mark p to
p}}}

Let $K$ be the transition count matrix of the first  $k$ steps of
the sequence. The number of sequences $(x_1, \ldots ,x_n) \in
\{0,1\}^n$, with $x_1= 0$, such that $K=\smx{k\,_{0,0} &
k\,_{0,1}\\ k\,_{1,0} & k\,_{1,1} }$ and $N=\smx{n\,_{0,0} &
n\,_{0,1}\\ n\,_{1,0} & n\,_{1,1} }$ is equal to the number of
sequences consistent with the transition count matrix $ \smx{
  n\,_{0,0}-k\,_{0,0}\: & \:n\,_{0,1}-k\,_{0,1}\\
  n\,_{1,0}-k\,_{1,0}\: & \:n\,_{1,1}-k\,_{1,1}
}
$
that is
\[
\left\{
\begin{array}{ll}
\binom{n_0^+-k_0^+}{n\,_{0,0}-k\,_{0,0}}        \binom{n_1^+ - k_1^+ -1}{n\,_{1,1}-k\,_{1,1}}    \;  & \mbox{if } \; x_n=0\\
\binom{n_0^+-k_0^+ -1}{n\,_{0,0}-k\,_{0,0}}        \binom{n_1^+ - k_1^+}{n\,_{1,1}-k\,_{1,1}}    \;  & \mbox{if } \; x_n=1\\
\end{array}
\right.
\]
But, as we have said, since we have fixed $x_1=0$, it is $x_n=0$
if $N$ is of the first kind, and $x_n=1$ if $N$ is of the second
kind. Then it is
\begin{multline*}
p\,_{0,K} = \sum_{N \in \Phi_1(0,n)}
 \binom{n_0^+-k_0^+}{n\,_{0,0}-k\,_{0,0}}  \binom{n_1^+ - k_1^+ -1}{n\,_{1,1}-k\,_{1,1}}
 \;\; p\,_{0,N}    +\\
\quad \quad \quad\qquad \qquad \qquad \qquad\qquad \qquad+ \sum_{N
\in \Phi_2(0,n)} \binom{n_0^+-k_0^+-1}{n\,_{0,0}-k\,_{0,0}}
\binom{n_1^+ - k_1^+}{n\,_{1,1}-k\,_{1,1}}  \;\; p\,_{0,N}
\end{multline*}
Finally the theorem follows by \eqref{eq : p w mark exb 01}  and
the fact that
\[
\frac { \binom{ n_0^+-k_0^+ } { n\,_{0,0}-k\,_{0,0} } }
{\binom{n_0^+}{ n\,_{0,0} } } = \frac{(n\,_{0,0})_{k\,_{0,0}}
(n_0^+-n\,_{0,0})_{k_0^+-k\,_{0,0}}}{(n_0^+)_{k_0^+}}
\]

\end{document}